%
%

\documentclass{amsart}
\usepackage{amsfonts,amssymb,amsmath,amsthm}
\usepackage{url}
\usepackage{enumerate}

\usepackage{amsmath}
\usepackage{amssymb}
\usepackage{tikz}

\renewcommand{\geq}{\geqslant}
\renewcommand{\leq}{\leqslant}

\newtheorem{thm}{Theorem}[section]
\newtheorem{lemma}[thm]{Lemma}
\newtheorem{cor}[thm]{Corollary}
\newtheorem{definition}[thm]{Definition}

\title{Connectivity in Hypergraphs}

\author{Megan Dewar}
\address{Tutte Institute for Mathematics and Computing\\ Ottawa, ON, Canada}
\email{tutte.institute+MeganDewar@gmail.com}

\author{David Pike}
\address{Department of Mathematics and Statistics\\ Memorial University of Newfoundland\\ St.~John's, NL, Canada}
\email{dapike@mun.ca}
\thanks{D.~Pike acknowledges research support from NSERC (grant number RGPIN-2016-04456).}

\author{John Proos}
\address{Tutte Institute for Mathematics and Computing\\ Ottawa, ON, Canada}
\email{tutte.institute+JohnProos@gmail.com}

\keywords{hypergraph; connectivity; computational complexity; transversal}
\subjclass{Primary 05C65, Secondary 05C40, 68Q17}


\begin{document}
\maketitle

\begin{abstract}
In this paper we consider two natural notions of connectivity for hypergraphs:  weak and strong.  We prove that the strong vertex connectivity of a connected hypergraph is bounded by its weak edge connectivity, thereby extending a theorem of Whitney from graphs to hypergraphs.  We find that while determining a minimum weak vertex cut can be done in polynomial time and is equivalent to finding a minimum vertex cut in the 2-section of the hypergraph in question, determining a minimum strong vertex cut is NP-hard for general hypergraphs.  Moreover, the problem of finding minimum strong vertex cuts remains NP-hard when restricted to hypergraphs with maximum edge size at most 3.  We also discuss the relationship between strong vertex connectivity and the minimum transversal problem for hypergraphs, showing that there are classes of hypergraphs for which one of the problems is NP-hard while the other can be solved in polynomial time.
\end{abstract}



\section{Introduction}
\label{intro}

When extending concepts from graph theory to the realm of hypergraph theory it is not unusual for there to be multiple natural ways in which the concepts can be generalized.  This is certainly the case when considering aspects of connectivity.  For a nontrivial graph $G$, its connectivity $\kappa(G)$ is defined as the least number of vertices whose deletion from $G$ results in a graph that is not connected.  Since each edge of $G$ is a 2-subset of the vertex set of the graph, deleting a vertex from an edge also results in the removal of that edge from the edge set of the graph.  However, for hypergraphs the removal of a vertex from each edge that contains it may have quite a different effect from also removing these edges from the edge set.  These distinctions give rise to the notions of {weak vertex deletion} and {strong vertex deletion}, respectively.  We correspondingly define the {weak vertex connectivity} $\kappa_W(H)$ (resp., the {strong vertex connectivity} $\kappa_S(H)$) of a nontrivial hypergraph $H$ to be the least number of vertices whose weak (resp., strong) deletion from $H$ results in a disconnected hypergraph, thereby generalizing connectivity as usually defined for graphs (see~\cite{BondyMurty2008} for more details about graph terminology).

Hypergraphs with weak vertex connectivity $\kappa_W(H)=1$ have recently been considered by Bahmanian and \v{S}ajna~\cite{bahmanian}.  In the present paper we consider vertex cuts of any size, with emphasis on strong vertex connectivity.  We also introduce weak and strong edge connectivity for hypergraphs.

In 1932, Whitney established one of the basic results on connectivity for graphs:  the connectivity $\kappa(G)$ of a nontrivial graph $G$ is bounded above by the edge-connectivity $\kappa'(G)$, which in turn is bounded above by the minimum degree $\delta(G)$ of the graph $G$~\cite{whitney}.  We extend this result to hypergraphs and, in so doing, also introduce notions of weak and strong edge deletion. Here, weak deletion of an edge $e$ merely involves removing $e$ from the edge multiset of the hypergraph.  By defining $\kappa'_W(H)$ to be the least number of edges whose weak deletion from $H$ results in a disconnected hypergraph, we  show that Whitney's result can be generalized as follows:

{
\renewcommand{\thethm}{\ref{thm:WhitneyHypergraph}}
\begin{thm}
Let $H=(V,E)$ be a nontrivial hypergraph with minimum degree $\delta(H)$.
Then $\kappa_S(H) \leq \kappa'_W(H) \leq \delta(H)$.
\end{thm}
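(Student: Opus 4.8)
The plan is to prove the two inequalities separately. The inequality $\kappa'_W(H)\le\delta(H)$ is immediate: if $v$ is a vertex of minimum degree and $F$ consists of the $\delta(H)$ edges incident with $v$, then weakly deleting $F$ isolates $v$, and since $H$ is nontrivial this leaves a disconnected hypergraph, so $\kappa'_W(H)\le|F|=\delta(H)$. If $H$ is itself disconnected all three quantities are trivially ordered, so from now on I would assume $H$ is connected and write $k:=\kappa'_W(H)\ge 1$.

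To bound $\kappa_S(H)$ I would fix a weak edge cut $F$ with $|F|=k$ of minimum size and let $C_1,\dots,C_m$ (with $m\ge 2$) be the components of $H-F$. The crucial first step is to show that \emph{every edge of $F$ meets every $C_i$}: for a fixed $i$, put $F_i=\{e\in F:e\cap C_i\ne\emptyset\}$; every edge of $H$ having vertices both inside and outside $C_i$ must lie in $F$ (otherwise it would be an edge of $H-F$ joining two distinct components), hence in $F_i$, so $H-F_i$ is already disconnected and minimality of $F$ forces $|F_i|\ge k$; as $F_i\subseteq F$ we conclude $F_i=F$. In particular every edge of $F$ has at least $m\ge 2$ vertices.

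Next I would split into two cases according to whether there is a ``missing cross-pair''. Suppose first that some vertices $p,q$ lying in distinct components of $H-F$ satisfy $\{p,q\}\notin F$. Then no edge of $F$ is contained in $\{p,q\}$ (such an edge would have at most two vertices and, having size at least two, would equal $\{p,q\}$), so one may choose $v_e\in e\setminus\{p,q\}$ for every $e\in F$ and set $T=\{v_e:e\in F\}$, which has size at most $k$ and avoids $p$ and $q$. Strongly deleting $T$ destroys every edge of $F$, so each surviving edge lies inside a single $C_i$; hence no walk of $H-T$ leaves the component of $H-F$ in which it starts, and $p$ and $q$ end up in different components of $H-T$, exhibiting a strong vertex cut of size at most $k$. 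In the remaining case every cross-pair $\{p,q\}$ (with $p,q$ in distinct components) is an edge, and hence a member of $F$; then already the $|C_1|\cdot(|V(H)|-|C_1|)$ pairs separating $C_1$ from the rest are distinct edges of $F$, so
\[
k=|F|\ \ge\ |C_1|\cdot\bigl(|V(H)|-|C_1|\bigr)\ \ge\ |V(H)|-1\ \ge\ \kappa_S(H),
\]
using $(|C_1|-1)\bigl(|V(H)|-|C_1|-1\bigr)\ge 0$ together with the standard convention (as for graphs) that deleting all but one vertex disconnects a hypergraph. I expect the only genuine obstacle to be this last, ``nearly complete bipartite'' case, together with the preliminary observation that each edge of a minimum weak edge cut meets every component of the resulting hypergraph; once that observation is in hand, the argument runs parallel to Whitney's original proof.
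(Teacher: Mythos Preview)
Your proof is correct and follows the same strategy as the paper's: take a minimum weak disconnecting set $F$, use minimality to show that every edge of $F$ meets every component of $H\setminus_W F$, then either hit each edge of $F$ by a vertex outside a fixed separated pair $\{p,q\}$ (yielding a strong cut of size at most $|F|$) or, when every cross-pair is a $2$-edge in $F$, count to get $|F|\ge |V|-1\ge\kappa_S(H)$. Your case analysis is in fact a bit more economical than the paper's: by letting the transversal $T$ draw vertices from any component rather than from a single $V_i$, you bypass the paper's separate handling of the cases $k\ge 3$ and $V_i\setminus Z_i\neq\emptyset$ and go straight to the final dichotomy.
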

\addtocounter{thm}{-1}
}

Maximum-flow minimum-cut algorithms
that have polynomially bounded running times,
such as the Edmonds-Karp algorithm~\cite{EdmondsKarp},
can be used to efficiently
compute the connectivity $\kappa(G)$ of any given graph $G$, as well as to find a corresponding vertex cut of cardinality $\kappa(G)$
(a polynomial time algorithm for determining the connectivity of a graph $G$ can also be found in~\cite[page~42]{NagamochiIbaraki}).
Hence these problems are in the class P of problems that can be solved in polynomial time.
We show that this is also the case for weak vertex connectivity of hypergraphs:

{
\renewcommand{\thethm}{\ref{thm_kappa_w_poly}}
\begin{thm}
Determining $\kappa_W(H)$ for a hypergraph $H$ and finding a minimum weak vertex cut of $H$ are in P.
\end{thm}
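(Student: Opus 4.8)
The plan is to show that weak vertex deletion affects connectivity in exactly the same way that ordinary vertex deletion affects the \emph{$2$-section} $[H]_2$ of $H$, the graph on vertex set $V$ in which two distinct vertices are adjacent precisely when they lie together in some edge of $H$. Since a hypergraph is connected if and only if its $2$-section is connected, this reduces the computation of $\kappa_W(H)$ to the computation of the ordinary vertex connectivity $\kappa([H]_2)$ of a graph, a problem already known to lie in P.

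The key step is the identity $[H \setminus S]_2 = [H]_2 \setminus S$ for every $S \subseteq V$, where $H \setminus S$ denotes the result of weakly deleting the vertices of $S$ from $H$, and $[H]_2 \setminus S$ is ordinary vertex deletion in the graph $[H]_2$. Indeed, fix distinct $u,v \in V \setminus S$. If $e \in E$ with $u,v \in e$, then, since $u,v \notin S$, the edge $e \setminus S$ of $H \setminus S$ contains both $u$ and $v$; conversely every edge of $H \setminus S$ has the form $e \setminus S$ for some $e \in E$. Hence $u$ and $v$ are adjacent in $[H\setminus S]_2$ iff some $e \in E$ contains both, iff $uv$ is an edge of $[H]_2$, iff (as $u,v \notin S$) $uv$ is an edge of $[H]_2 \setminus S$. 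Combining this identity with the fact that $H \setminus S$ is connected iff $[H \setminus S]_2$ is connected, we get that $H \setminus S$ is connected iff $[H]_2 \setminus S$ is connected. Therefore $S$ is a weak vertex cut of $H$ iff $S$ is a vertex cut of $[H]_2$; after checking that the boundary conventions (complete graphs, and hypergraphs on very few vertices) agree on the two sides, this yields $\kappa_W(H) = \kappa([H]_2)$, and a minimum weak vertex cut of $H$ is precisely a minimum vertex cut of $[H]_2$.

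Finally, $[H]_2$ has $|V|$ vertices and at most $\binom{|V|}{2}$ edges, and it can be constructed in time polynomial in the size of $H$ by scanning the pairs of vertices within each edge. The ordinary vertex connectivity of a graph, together with a vertex cut attaining it, can be computed in polynomial time by maximum-flow/minimum-cut techniques such as the Edmonds--Karp algorithm~\cite{EdmondsKarp} (see also~\cite[page~42]{NagamochiIbaraki}). Composing these two polynomial-time procedures gives an algorithm that outputs $\kappa_W(H)$ and a corresponding minimum weak vertex cut in polynomial time.

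The main obstacle I anticipate is not conceptual but a matter of careful bookkeeping: one must handle degenerate edges (singletons, and empty or repeated edges if the model permits them) so that weak deletion of $S$ genuinely produces the graph $[H]_2 \setminus S$ rather than something that merely has the same connected components on most vertices, and one must reconcile the convention defining $\kappa$ on complete graphs and on small hypergraphs so that the \emph{equality} $\kappa_W(H) = \kappa([H]_2)$, and not just agreement of the nontrivial cut structure, holds in every case.
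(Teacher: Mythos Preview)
Your proposal is correct and follows essentially the same route as the paper: you establish $[H\setminus_W S]_2=[H]_2\setminus S$, deduce that weak vertex cuts of $H$ coincide with vertex cuts of $[H]_2$ (hence $\kappa_W(H)=\kappa([H]_2)$), and then invoke a polynomial-time algorithm for graph vertex connectivity. The paper organizes this into two preliminary lemmas (Lemmas~\ref{lemma_connected_2section} and~\ref{lemma_weak_cuts_cut_2section}) and then cites~\cite[page~42]{NagamochiIbaraki}, but the argument is the same.
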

\addtocounter{thm}{-1}
}

However, determining the strong connectivity of a hypergraph is, in general, computationally intractable, which is in stark contrast to graphs (for which ``strong'' connectivity coincides with ``weak'' connectivity and hence can be determined in polynomial time):


{
\renewcommand{\thethm}{\ref{thm_kappa_strong}}
\begin{thm}
The problem of determining $\kappa_S(H)$ is NP-hard for arbitrary hypergraphs. Furthermore, the problem remains NP-hard when $H$ is restricted to hypergraphs with maximum edge size at most 3.
\end{thm}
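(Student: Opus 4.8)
The plan is to give a polynomial-time reduction from \textsc{Vertex Cover} (which is NP-complete) to the problem of computing $\kappa_S$, and to arrange that the hypergraph produced has every edge of size $2$ or $3$. This single reduction then yields both assertions of the theorem, since hardness on the subclass of hypergraphs with maximum edge size at most $3$ immediately implies hardness for arbitrary hypergraphs.

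Given a graph $G=(V,E_G)$ with $E_G\neq\emptyset$ and $|V|=n$, construct a hypergraph $H$ as follows. Take two disjoint copies $V_1=\{(v,1):v\in V\}$ and $V_2=\{(v,2):v\in V\}$ of $V$; place a complete graph on each of $V_1$ and $V_2$ (all $\binom{n}{2}$ pairs within $V_1$, and all within $V_2$, are edges of $H$); and for each $\{x,y\}\in E_G$, after fixing an arbitrary orientation $x\to y$, add the size-$3$ ``bridge'' edge $B_{xy}=\{(x,1),(y,1),(x,2)\}$. All edges of $H$ have size $2$ or $3$, and $H$ is connected because the two cliques are joined by at least one bridge edge. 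The aim is to prove that $\kappa_S(H)=\tau(G)$, where $\tau(G)$ is the minimum size of a vertex cover of $G$; given this, for every $k$ we have $\kappa_S(H)\leq k$ if and only if $\tau(G)\leq k$, so computing $\kappa_S$ computes $\tau$ and the theorem follows.

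The inequality $\kappa_S(H)\leq\tau(G)$ is the easy direction: if $C$ is a minimum vertex cover of $G$, then strongly deleting $S=\{(x,1):x\in C\}$ destroys every bridge edge (each $B_{xy}$ contains $(x,1)$ or $(y,1)$, as $C$ covers $\{x,y\}$), leaving the clique on $V_1\setminus S$ and the clique on $V_2$ in distinct components, so $S$ is a strong vertex cut of size $\tau(G)$. The reverse inequality $\kappa_S(H)\geq\tau(G)$ is the heart of the proof. Let $S$ be any strong vertex cut. Since $V_1\setminus S$ induces a clique, it lies in a single component of $H-S$, and likewise for $V_2\setminus S$; as the only edges between $V_1$ and $V_2$ are bridge edges, $H-S$ can fail to be connected only if $V_1\subseteq S$ or $V_2\subseteq S$ (costing $n\geq\tau(G)$) or else every bridge edge meets $S$. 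In the remaining case $S$ is a transversal of $\mathcal{B}=\{B_{xy}:\{x,y\}\in E_G\}$, so it suffices to show $\tau(\mathcal{B})=\tau(G)$: if $(x,2)\in S$, then, because $(x,2)$ lies only in bridge edges of the form $\{(x,1),(\cdot,1),(x,2)\}$ --- all of which also contain $(x,1)$ --- replacing $(x,2)$ by $(x,1)$ neither enlarges $S$ nor reduces its coverage; hence a minimum transversal of $\mathcal{B}$ may be chosen inside $V_1$, and there the condition of hitting every $B_{xy}$ is exactly the requirement that the corresponding vertex subset of $G$ be a vertex cover.

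The main obstacle is precisely this lower bound: excluding ``cheap, unintended'' strong cuts. The clique structure on $V_1$ and $V_2$ is what tames it, forcing any disconnection to come from severing all bridges and thereby reducing the analysis to an ordinary vertex-cover computation; without such rigidity, a small strong cut could disconnect $H$ in an irrelevant way and $\kappa_S(H)$ could drop below $\tau(G)$. It remains only to check routine points: that $H$ is connected and nontrivial, that isolated vertices of $G$ cause no difficulty, and that the degenerate case $E_G=\emptyset$ of \textsc{Vertex Cover} is trivial and may be excluded.
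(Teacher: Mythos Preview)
Your reduction is correct and complete; the key step---that any strong vertex cut of $H$ must hit every bridge edge, and that a minimum transversal of the bridge family $\mathcal{B}$ can be pushed into $V_1$ and then read off as a vertex cover of $G$---goes through exactly as you describe. The degenerate cases are handled, and the construction plainly has maximum edge size $3$.

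Your route, however, is genuinely different from the paper's. The paper builds $H$ on the vertex set $V_G\cup A_u\cup A_v$ (three blocks of size $n$), taking all size-$2$ edges $\{u_i,x_j\}$ between $A_u$ and $V_G$ together with the size-$3$ edges $e\cup\{v_i\}$ for every $e\in E_G$ and every $v_i\in A_v$. The analysis then proceeds via four claims: first that vertex covers of $G$ coincide with strong $(A_u,A_v)$-vertex cuts, and then---the bulk of the work---that any minimum strong cut must avoid both $A_u$ and $A_v$ and must in fact separate $A_u$ from $A_v$. The redundancy built into $A_u$ and $A_v$ (each $u_i$ and each $v_i$ plays the same role) is what forces the cut into $V_G$. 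By contrast, you force the cut to be a transversal of $\mathcal{B}$ by making $V_1$ and $V_2$ into cliques, so that disconnection can only occur by severing every bridge; your ``push into $V_1$'' replacement argument then finishes in one line. Your construction is smaller ($2n$ vertices and $2\binom{n}{2}+|E_G|$ edges versus $3n$ vertices and $n^2+n|E_G|$ edges) and the analysis is noticeably shorter; the paper's construction has the mild extra feature that the minimum strong vertex cuts of $H$ are \emph{exactly} the minimum vertex covers of $G$ as sets, whereas in your construction they merely have the same cardinality---but that extra precision is not needed for the theorem.
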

\addtocounter{thm}{-1}
}

We subsequently consider the complexity of calculating $\kappa_S(H)$ for various classes of hypergraphs, as well as the problem of determining the size $\tau(H)$ of a minimum transversal of a hypergraph $H$ (which is also NP-hard when considered over the set of all hypergraphs
since it is equivalent to the Set Covering problem of~\cite{karp}).
We show that despite similarities between transversals and strong vertex cuts,
there exist classes of hypergraphs for which calculating $\kappa_S(H)$ is NP-hard while calculating $\tau(H)$ is in P, and vice-versa.

\section{Background terminology and notation}
\label{definitions}

In this section we introduce notation and several concepts that are necessary when studying connectivity in hypergraphs.  We begin with a review of basic hypergraph terminology.

\subsection{Basic definitions}
A \textbf{hypergraph} $H$, denoted $H = (V, E)$, consists of a set $V$ of \textbf{vertices} together with a multiset $E = (e_i)_{i \in I}$ of submultisets of $V$ called \textbf{edges};
$E$ is indexed by an \textbf{index set} $I$.
Throughout this paper we consider only finite hypergraphs, i.e., $V$ and $I$ are both finite.

A hypergraph with no vertices is called a \textbf{null} hypergraph, a hypergraph with only one vertex is called a \textbf{trivial} hypergraph,
and all other hypergraphs are \textbf{nontrivial}.
A hypergraph with no edges is called \textbf{empty}.  Given that $E$ is a multiset, a hypergraph may contain repeated edges.
If $e_i = e_j$, then $e_i$ and $e_j$ are said to be \textbf{parallel}.  The number of edges parallel to $e_i$ (including $e_i$) is the \textbf{multiplicity} of $e_i$.

For $v,w \in V$, $v$ and $w$ are said to be \textbf{adjacent} if there exists an edge $e_i \in E$ such that the multiset $[v, w] \subseteq e_i$ (we use square braces when listing elements of a multiset, and we use curly brackets for sets).  We explicitly allow $v = w$, and use the notation $[v; t]$ to denote the multiset consisting of $t$ copies of the element $v$.  A vertex $v$ and an edge $e_i$ are said to be \textbf{incident} if $v \in e_i$.
Let $m_{e_i}^H(v)$ denote the \textbf{multiplicity} with which $v$ appears in edge $e_i$ of hypergraph $H$.  The \textbf{degree} of vertex $v$ in $H$ is defined as $deg_H(v) = \sum_{i \in I} m_{e_i}^H(v)$;
if the context is unambiguous, then we may simply write $deg(v)$.
The \textbf{minimum degree} of $H$, denoted by $\delta(H)$, is $\min_{v \in V} deg(v)$.

We note that the definition of edges as submultisets of $V$ requires terminology to distinguish between the number of vertices in an edge (counting multiplicities) and the number of {distinct} vertices in an edge.  Let $|e_i|$ denote the number of elements in the multiset $e_i$, which we will refer to as the \textbf{size} of the edge $e_i$.  Taking terminology from multiset theory, let $supp(e_i)$, called the \textbf{support} of $e_i$, denote the set of distinct elements of the edge $e_i$, i.e., $supp(e_i) = \{v \in e_i \,|\, m_{e_i}^H (v) > 0\}$.
Multiset theory refers to $|supp(e_i)|$ as the \textbf{cardinality} of the multiset (edge) $e_i$.
A hypergraph is called \textbf{$\mathbf{k}$-uniform} if $|e_i| = k$ for all $i \in I$.  A hypergraph which is 2-uniform
is called a \textbf{graph} (or multigraph) and if, in addition, there are no parallel edges or loops, then it is called a \textbf{simple} graph.
A hypergraph $H=(V,E)$ is called \textbf{simple} if it has no parallel edges and each edge is a set
(i.e., $e_i = supp(e_i)$ for each edge $e_i \in E$).

In Section \ref{strong_v_con_complexity} we will be discussing matchings.  A \textbf{matching} in a hypergraph is a set of pairwise disjoint nonempty edges.

\subsection{Associated hypergraphs and graphs}

Given a hypergraph $H = (V, E)$, the \textbf{dual hypergraph}, denoted $H' = (V', E')$, is the hypergraph with $V' = I$ and $E' = (e'_v)_{v \in V}$, where $e'_v$ is the submultiset of $I$ with $m_{e'_v}^{H'}(i) = m_{e_i}^H(v)$.  That is, $H'$ has a vertex for every edge of $H$, an edge for every vertex of $H$, and the multiplicity of $i$ in the edge of $H'$ corresponding to the vertex $v$ of $H$ is the multiplicity in $H$ of $v$ in $e_i$.  It is perhaps easier to visualize the dualization process as creating the hypergraph whose incidence matrix is the transpose of the incidence matrix of the original.  The incidence matrix of a hypergraph $H$ is 
the $|V| \times |I|$ matrix
$M = (m_{ij})$, where rows are indexed by vertices,  columns are indexed by edges, and $m_{ij} = m_{e_j}^H(v_i)$ (i.e., $m_{ij}$ is the multiplicity of $v_i$ in $e_j$).


Given a hypergraph $H=(V,E)$, the \textbf{2-section} of $H$ is the graph denoted $[H]_2 = (V, E_2)$, where $[v, w] \in E_2$ if there exists an edge $e_i \in E$ such that $[v, w] \subseteq e_i$. Note that $[H]_2$ can contain loops, but not parallel edges.  The \textbf{incidence graph} of $H$ is the graph denoted $G(H) = (V \cup E, E')$,
where $E' = \big[    \{v, e_i\}; m_{e_i}^H(v)  \,|\, v \in V, i \in I,  v \in e_i    \big]$.
Note that $G(H)$ is bipartite with bipartition $(V, E)$.  An important observation is that the incidence graph retains complete information about the hypergraph, whereas the 2-section does not.

\subsection{Substructures of hypergraphs}

An extensive list of hypergraph constructions appears in Bahmanian and {\v S}ajna \cite{bahmanian}.  Some, but not all, of their terminology coincides with \cite{duchet}, while the terminology is somewhat reversed in \cite{voloshin}.  Here we introduce new terminology which we think more clearly represents the substructures created, while referencing the alternate naming conventions that have been employed in other presentations.

A hypergraph $H' = (V', E')$ is a \textbf{weak subhypergraph} of $H = (V, E)$ if $V' \subseteq V$, $I' \subseteq I$, $E' = (e'_i)_{i \in I'}$, and for each $i \in I'$, $e'_i = [v; m_{e_i}^H(v) \,|\, v \in e_i \cap V']$.
Equivalently, the incidence matrix of $H'$, after a suitable permutation of its rows and columns, is a submatrix of the incidence matrix of $H$.
Weak subhypergraphs are called ``subhypergraphs'' by Bahmanian and {\v S}ajna~\cite{bahmanian}, as well as by Duchet~\cite{duchet}.
A hypergraph $H'=(V',E')$ is call an \textbf{induced weak subhypergraph} of the hypergraph $H=(V,E)$ if $V' \subseteq V$ and $E' = (e'_i)_{i \in I'}$,
where $I' = \{i \in I \,|\, e_i \cap V' \ne \emptyset\}$ and $e'_i = [v; m_{e_i}^H(v) \,|\, v \in e_i \cap V]$ for each $i \in I'$.

A hypergraph $H' = (V',E')$ is called a \textbf{strong subhypergraph} of the hypergraph $H=(V,E)$
if $V' \subseteq V$ and $E' \subseteq E$.
These are termed ``hypersubgraphs'' by Bahmanian and {\v S}ajna~\cite{bahmanian}, and ``{partial (sub)hypergraphs}'' by Duchet~\cite{duchet}.
A strong subhypergraph $H'=(V',E')$ of $H=(V,E)$, with $E' = (e_i)_{i \in I'}$, is said to be
\textbf{induced by $V'$} if $I'=\{ i \in I \,|\, supp(e_i) \subseteq V' \}$,
and is said to be \textbf{induced by $E'$ (or $I'$)} if $V' = \bigcup_{i \in I'} supp(e_i)$.

Note that by definition every strong subhypergraph is also a weak subhypergraph. This is similar to connectivity of directed graphs, where every strongly connected directed graph is also weakly connected.

A subhypergraph $H'=(V',E')$ of a hypergraph $H=(V,E)$ is said to be a \textbf{spanning} subhypergraph if $V' = V$.

\subsection{Paths and walks}

Given a hypergraph $H = (V, E)$ we define a \textbf{walk} in $H$ to be an alternating sequence $v_1, e_1, v_2, \ldots, e_s, v_{s+1}$ of vertices and edges of $H$ such that:
\begin{enumerate}
\item $v_j \in V$ for $j = 1, \ldots, s+1$;
\item $e_j \in E$ for $j = 1, \ldots, s$; and
\item the multiset $[ v_j, v_{j+1} ] \subseteq e_j$ for $j = 1, \ldots, s$.
\end{enumerate}
Such a walk from $v_1$ to $v_{s+1}$ will be referred to as a \textbf{$\mathbf{(v_1,v_{s+1})}$-walk}. When the edges of a walk are either canonical or unimportant, we shall sometimes denote it simply as a sequence of vertices.
As in graph theoretic terminology, a \textbf{path} is a walk with the additional restrictions
that the $s+1$ vertices are all distinct and the $s$ edges are all distinct.
A \textbf{cycle} is a walk with $s$ distinct edges and $s$ distinct vertices such that $v_1 = v_{s+1}$.
The \textbf{length} of a walk, path or cycle is the number of edges (counting multiplicity for walks) in the sequence; i.e., it is $s$ in the foregoing definitions.
See \cite{bahmanian} for a more rigorous treatment of walks, paths, cycles and trails in the case of hypergraphs whose edges are sets.

Two vertices $v,w \in V$ are said to be \textbf{connected} in $H$ if there exists a $(v,w)$-path in $H$;
otherwise $v$ and $w$ are \textbf{separated} from each other.
A hypergraph $H$ is \textbf{connected} if every pair of vertices $v,w \in V$ is connected in $H$;
otherwise $H$ is \textbf{disconnected}.
A \textbf{connected component} of a hypergraph $H$ is a maximal connected weak subhypergraph of $H$. Note that the maximality condition implies that connected components will, in fact, be strong subhypergraphs.
We will use $c(H)$ to denote the number of connected components of $H$.

\subsection{Vertex and edge deletion}

Given a hypergraph $H = (V, E)$ we can form new hypergraphs by deleting vertices in the following ways:

\begin{itemize}
\item \textbf{strong vertex deletion} of a vertex $v \in V$ creates the hypergraph $H' = (V', E')$ where $V' = V \setminus \{v\}$, $E'= (e_i)_{i \in I'}$ and $I' = \{i \in I \,|\, v \notin e_i\}$. That is, strong deletion of $v$ removes $v$ and all edges that are incident to $v$ from the hypergraph.
    We use the notation $H \setminus_{S} v$ to denote the hypergraph formed by strongly deleting the vertex $v$ from $H$.
    For any subset $X$ of $V$, we use $H \setminus_{S} X$ to denote the hypergraph formed by strongly deleting all the vertices of $X$ from $H$.

\item \textbf{weak vertex deletion} of a vertex $v \in V$ creates the hypergraph $H' = (V', E')$ where $V' = V \setminus \{v\}$ and $E' = (e'_i)_{i \in I}$ such that for $i \in I$ we have
    $e'_i = e_i \setminus [v; m_{e_i}^H(v)]$. That is, weak deletion of $v$ removes $v$ from the vertex set, and all occurences of $v$ from the edges of the hypergraph $H$.
    We use the notation $H \setminus_{W} v$ to denote the hypergraph formed by weakly deleting the vertex $v$ from $H$.
    For any subset $X$ of $V$, we use $H \setminus_{W} X$ to denote the hypergraph formed by weakly deleting all the vertices of $X$ from $H$.
\end{itemize}
Similarly, we define strong and weak edge deletion as follows:
\begin{itemize}
\item \textbf{strong edge deletion} of an edge $e_j \in E$ creates the hypergraph $H' = (V', E')$ where $V' = V \setminus supp(e_j)$, $E' = (e'_i)_{i \in I'}$, $I' = I \setminus \{j\}$ and $e'_i = e_i \setminus [v; m_{e_i}^H(v) \,|\, v \in e_j]$ for $i \ne j$. That is, strong edge deletion of $e_j$ removes $e_j$ from the hypergraph and weakly deletes all the vertices incident with $e_j$.
    We use the notation $H \setminus_{S} e_j$ to denote the hypergraph formed by strongly deleting the edge $e_j$ from $H$.
    For any submultiset $F$ of $E$, we use $H \setminus_{S} F$ to denote the hypergraph formed by strongly deleting all the edges of $F$ from $H$.
\item \textbf{weak edge deletion} of an edge $e_j \in E$ creates the hypergraph $H' = (V, E')$ where $I' = I \setminus \{j\}$ and $E' = (e_i)_{i \in I'}$. That is, weak edge deletion of $e_j$ simply removes $e_j$ without affecting the rest of the hypergraph.
    We use the notation $H \setminus_{W} e_j$ to denote the hypergraph formed by weakly deleting the edge $e_j$ from $H$.
    For any submultiset $F$ of $E$, we use $H \setminus_{W} F$ to denote the hypergraph formed by weakly deleting all the edges of $F$ from $H$.
\end{itemize}

Note that strong and weak edge deletion in a hypergraph $H$ correspond to strong and weak vertex deletion, respectively, in the dual of $H$.

It is also worth noting that every weak subhypergraph of a hypergraph $H$ can be formed by performing a sequence of weak vertex and weak edge deletions on $H$. Likewise, every strong subhypergraph of $H$ can be formed by performing a sequence of strong vertex and weak edge deletions on $H$.

\section{Connectivity}
For a graph $G$, a \textbf{cut vertex} is any vertex whose deletion from $G$ increases the number of connected components, while a \textbf{vertex cut} is any set of vertices whose deletion from $G$ results in a disconnected graph.
We generalize these concepts from graphs to hypergraphs.
\begin{definition}
Let $H = (V,E)$ be a nontrivial hypergraph.  A vertex $v \in V$ is called a \textbf{weak cut vertex} of $H$ if $H \setminus_{W} v$ has more connected components than $H$, and a set $X \subseteq V$ is called a \textbf{weak vertex cut} of $H$ if $H \setminus_{W} X$ is disconnected.  We define the \textbf{weak vertex connectivity} of $H$, denoted $\kappa_W(H)$, as follows:  if $H$ has at least one weak vertex cut, then $\kappa_W(H)$ is the cardinality of a minimum weak vertex cut of $H$; otherwise, $\kappa_W(H)=|V|-1$.  Similar to the convention used in~\cite[page~207]{BondyMurty2008} for connectivity of trivial graphs, we adopt the convention that the weak vertex connectivity of a null or trivial hypergraph is 1.
\end{definition}

\begin{definition}
Let $H = (V,E)$ be a nontrivial hypergraph. A vertex $v \in V$ is called a \textbf{strong cut vertex} of $H$ if $H \setminus_{S} v$ has more connected components than $H$, and a set $X \subseteq V$ is called a \textbf{strong vertex cut} of $H$ if $H \setminus_{S} X$ is disconnected. We define the \textbf{strong vertex connectivity} of $H$, denoted $\kappa_S(H)$, as follows:  if $H$ has at least one strong vertex cut, then $\kappa_S(H)$ is the cardinality of a minimum strong vertex cut of $H$; otherwise, $\kappa_S(H)=|V|-1$. By convention, the strong vertex connectivity of a null or trivial hypergraph is 1.
\end{definition}

Note that for connected hypergraphs weak (resp.\ strong) cut vertices correspond to weak (resp.\ strong) vertex cuts of size 1. The convention that null and trivial hypergraphs have weak and strong vertex connectivity 1 ensures that a hypergraph $H$ has $\kappa_W(H)=0$ (or $\kappa_S(H) =0$) if and only if $H$ is disconnected. This generalizes vertex connectivity of graphs as defined in~\cite{BondyMurty2008}.

Observe that neither weak vertex connectivity nor strong vertex connectivity is affected
by edges with multiplicity exceeding 1,
by edges of cardinality less than 2,
or by vertices having multiplicity (within an edge) exceeding 1. As such, unless otherwise stated, our discussion of vertex connectivity shall assume that the hypergraphs being considered have no repeated edges, no repeated vertices within an edge and no edges of size less than 2.

\begin{lemma}
Let $H=(V,E)$ be a hypergraph.
Then $\kappa_S(H) \leq \kappa_W(H)$.
\end{lemma}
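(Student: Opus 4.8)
The plan is to show that any weak vertex cut of $H$ is also a strong vertex cut, which immediately yields $\kappa_S(H) \leq \kappa_W(H)$ after checking the boundary conventions. The key observation is that strong deletion of a set $X$ produces a subhypergraph of what remains after weak deletion of $X$: both operations remove the vertices of $X$ from $V$, but weak deletion merely trims the occurrences of $X$-vertices from each edge, whereas strong deletion additionally discards every edge that met $X$. Hence the edge multiset of $H \setminus_S X$ is contained in the edge multiset of $H \setminus_W X$ (on the same vertex set $V \setminus X$).

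First I would handle the degenerate cases. If $H$ is disconnected then $\kappa_S(H) = \kappa_W(H) = 0$ by the remark following the definitions, and if $H$ is null or trivial then both parameters equal $1$ by convention; so we may assume $H$ is nontrivial and connected. If $H$ has no weak vertex cut, then $\kappa_W(H) = |V| - 1$, which is the largest value $\kappa_S(H)$ can take (strongly deleting all but one vertex leaves a trivial, hence connected, hypergraph, so no strong vertex cut has size $|V|$ or more that is ``minimal'' beyond $|V|-1$; and if $H$ has no strong vertex cut either, then $\kappa_S(H) = |V|-1 = \kappa_W(H)$). So the substantive case is when $H$ has a weak vertex cut; let $X$ be a minimum one, so $|X| = \kappa_W(H)$ and $H \setminus_W X$ is disconnected.

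Next I would argue that $X$ is also a strong vertex cut. Since $H \setminus_W X$ is disconnected, there exist vertices $u, w \in V \setminus X$ with no $(u,w)$-path in $H \setminus_W X$. Any $(u,w)$-path in $H \setminus_S X$ would, using the containment of edge multisets noted above together with the fact that every edge of $H \setminus_S X$ is (after the vertex trimming, which here changes nothing since such edges already avoid $X$) an edge available in $H \setminus_W X$, yield a $(u,w)$-walk and hence a $(u,w)$-path in $H \setminus_W X$ — a contradiction. Therefore $u$ and $w$ are separated in $H \setminus_S X$, so $H \setminus_S X$ is disconnected and $X$ is a strong vertex cut. Consequently $\kappa_S(H) \leq |X| = \kappa_W(H)$.

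The only mild subtlety — and the place to be careful — is the bookkeeping around edges that are incident to $X$: one must confirm that passing from $H \setminus_W X$ to $H \setminus_S X$ only removes edges and never creates connections, which is exactly the content of the edge-multiset containment. Everything else is routine, so I do not expect a genuine obstacle here; this lemma is essentially the hypergraph analogue of the trivial inequality $\kappa_S \le \kappa_W$ that mirrors ``strongly connected implies weakly connected'' noted earlier in the paper.
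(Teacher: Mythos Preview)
Your proposal is correct and follows essentially the same approach as the paper: both handle the null/trivial and disconnected boundary cases by convention, then use the key observation that $H \setminus_S X$ is a spanning (strong) subhypergraph of $H \setminus_W X$ (your ``edge-multiset containment''), so any weak vertex cut is automatically a strong vertex cut. You are more explicit than the paper in treating the case where $H$ has no weak vertex cut, but the argument is the same.
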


\begin{proof}
If $H$ is a null or trivial hypergraph then $\kappa_S(H) = \kappa_W(H)=1$. If $H$ is disconnected then $\kappa_S(H) = \kappa_W(H)=0$.

Suppose now that $H$ is nontrivial and connected.
For any $X \subseteq V$, the hypergraph $H \setminus_S X$ is a spanning strong subhypergraph of $H \setminus_W X$.
Thus any two vertices that are adjacent in $H \setminus_S X$ are also adjacent in $H \setminus_W X$.
\end{proof}

In \cite{bahmanian} Bahmanian and {\v S}ajna used the phrase ``cut vertex''
for any vertex whose weak deletion increases the number of connected components of the hypergraph;
hence a ``cut vertex'' in the sense of Bahmanian and {\v S}ajna corresponds to a weak cut vertex.
It is proved in their Theorem~3.23 that if a hypergraph $H$ has no edges of size less than~2 then a vertex of $H$ is a (weak) cut vertex of $H$ if and only if it is a cut vertex (in the traditional sense for graphs) of the incidence graph of $H$.
Since graph connectivity can be determined in polynomial time
(see~\cite[page~42]{NagamochiIbaraki} for an algorithm), this provides a polynomial time means of identifying whether a connected hypergraph has any weak cut vertices.

We shall now introduce the notions of weak and strong edge connectivity for hypergraphs.

\begin{definition}\label{Defn-WeakEdgeConnectivity}
Let $H=(V,E)$ be a hypergraph.  A submultiset $F \subseteq E$ is called a \textbf{weak disconnecting set} of $H$ if $H \setminus_W F$ is disconnected.
If $H$ is nontrivial, then we define its \textbf{weak edge connectivity}, denoted $\kappa'_W(H)$, as the minimum cardinality of any of its weak disconnecting sets.
Similar to the convention used in~\cite[page~216]{BondyMurty2008} for edge connectivity of trivial graphs, we adopt the convention that the weak edge connectivity of a null or trivial hypergraph is 1.
\end{definition}

\begin{definition}
Let $H=(V,E)$ be a hypergraph.  A submultiset $F \subseteq E$ is called a \textbf{strong disconnecting set} of $H$ if $H \setminus_S F$ is disconnected.
If $H$ is nontrivial and has at least one strong disconnecting set,
then we define its \textbf{strong edge connectivity}, denoted $\kappa'_S(H)$, as the minimum cardinality of any of its strong disconnecting sets. If $H$ is nontrivial and has no strong disconnecting set then we define $\kappa'_S(H)=|E|$.
By convention, the strong edge connectivity of a null or trivial hypergraph is 1.
\end{definition}

Suppose $H=(V,E)$ is a connected hypergraph and that $X \subset V$. Define the \textbf{weak edge cut} of $H$ associated to $X$ as $\partial(X) = \big\{ e \in E \,\big|\, e \cap X \neq \emptyset, e \cap (V \setminus X ) \neq \emptyset \big\}$, that is, the submultiset of edges that are incident to at least one vertex in each of $X$ and $V \setminus X$. The size of a weak edge cut is the number of edges that it contains. Note that all weak edge cuts are disconnecting sets and that, as remarked by Cheng~\cite{Cheng1999}, all minimal weak disconnecting sets are weak edge cuts.
Thus $\kappa'_W(H)$ for nontrivial $H$ can equally well be defined as the size of a minimum weak edge cut.

In~\cite{Cheng1999} weak edge cuts are considered by Cheng who uses the phrase ``$k$-edge-connected'' to describe any hypergraph $H=(V,E)$ for which each weak edge cut of $H$ has at least $k$ edges.
Relating Cheng's terminology to the notation of Definition~\ref{Defn-WeakEdgeConnectivity}, it follows that $H$ is weakly $k$-edge-connected if and only if $k \leq \kappa'_W(H)$.
In~\cite{Frank2006}, using similar terminology to Cheng,
Frank observes that a hypergraph is weakly $k$-edge-connected if and only if there are $k$ edge-disjoint $(u,v)$-paths in $H$ for each pair of distinct vertices $u,v \in V$.
Chekuri and Xu ~\cite{ChekuriXu2017} have recently demonstrated an $O(p+n^2 \kappa'_W(H))$ time algorithm to find a minimum weak edge cut of a hypergraph $H$,
where $p = \sum_{e \in E} |e|$ and $n=|V|$.

Whereas $\kappa_W(H)$ (resp.\ $\kappa_S(H)$) represents the fewest number of vertices whose weak (resp.\ strong) deletion from a nontrivial hypergraph $H$
results in the separation of some pair of vertices,
on occasion we will want to separate specific vertices from each other.  We therefore introduce the following definitions and notation.

\begin{definition}
Let $H = (V,E)$ be a hypergraph and let $u,v \in V$, $u \neq v$.
A set $X \subseteq V \setminus \{u,v\}$ is called a \textbf{weak $\mathbf{(u,v)}$-vertex cut} (resp.\ \textbf{strong $\mathbf{(u,v)}$-vertex cut}) in $H$
if $u$ and $v$ are separated in $H \setminus_{W} X$ (resp.\ $H \setminus_{S} X$).
If $A_u$ and $A_v$ are disjoint subsets of $V$, then
a set $X \subseteq V \setminus (A_u \cup A_v)$ is called a \textbf{weak $\mathbf{(A_u,A_v)}$-vertex cut} (resp.\ \textbf{strong $\mathbf{(A_u,A_v)}$-vertex cut}) in $H$
if, for each $u \in A_u$ and each $v \in A_v$, $u$ and $v$ are separated in $H \setminus_{W} X$ (resp.\ $H \setminus_{S} X$).
We denote the cardinality of a minimum weak (resp.\ strong) $(u,v)$-vertex cut in $H$ by $\kappa_W(H,u,v)$ (resp.\ $\kappa_S(H,u,v)$);
if $u$ and $v$ cannot be separated by any weak (resp.\ strong) vertex cut, then we set $\kappa_W(H,u,v) = |V|-1$ (resp.\ $\kappa_S(H,u,v) = |V|-1$).
\end{definition}

For edge deletion we have similar definitions.

\begin{definition}
Let $H = (V,E)$ be a hypergraph and let $u,v \in V$, $u \neq v$.
A set $F \subseteq E$ is called a \textbf{weak $\mathbf{(u,v)}$-disconnecting set} (resp.\ \textbf{strong $\mathbf{(u,v)}$-disconnecting set}) in $H$
if $u$ and $v$ are separated in $H \setminus_{W} F$ (resp.\ $H \setminus_{S} F$).
If $A_u$ and $A_v$ are disjoint subsets of $V$, then
a set $F \subseteq E$ is called a \textbf{weak $\mathbf{(A_u,A_v)}$-disconnecting set} (resp.\ \textbf{strong $\mathbf{(A_u,A_v)}$-disconnecting set}) in $H$
if, for each $u \in A_u$ and each $v \in A_v$, $u$ and $v$ are separated in $H \setminus_{W} F$ (resp.\ $H \setminus_{S} F$).
We denote the cardinality of a minimum weak (resp.\ strong) $(u,v)$-disconnecting set in $H$ by $\kappa'_W(H,u,v)$ (resp.\ $\kappa'_S(H,u,v)$, with the convention that $\kappa'_S(H,u,v)=|E|$ when no strong $(u,v)$-disconnecting set exists).
\end{definition}

\subsection{Weak versus strong vertex connectivity}

In comparing weak vertex connectivity with strong vertex connectivity,
an important initial observation is
that it is false that every strong vertex cut of size $\kappa_S(H)$ is a subset of a weak vertex cut of size $\kappa_W(H)$.
In fact, there exists a hypergraph $H$ in which any minimum weak vertex cut and any minimum strong vertex cut are disjoint.
For example, take two copies of the complete graph on four vertices; one on the vertex set $\{x_1, x_2, x_3, x_4\}$ denoted $K_{4_x}$, and the other on vertex set $\{y_1, y_2, y_3, y_4\}$ denoted $K_{4_y}$. Define $H = (V,E)$ such that
\[V = \{x_1, x_2, x_3, x_4,y_1, y_2, y_3, y_4, z\}\]
and \[E = E(K_{4_x}) \cup E(K_{4_y}) \cup \big\{\{x_1,y_1, z\}, \{x_2,y_2,z\}\big\}\]
(see Figure~\ref{fig:disjoint_cuts}). Then $\{z\}$ is the unique minimum strong vertex cut of $H$, while the minimum weak vertex cuts of $H$ are $\{x_1, x_2\}$ and $\{y_1,y_2\}$.

\begin{figure}[tbhp]
\begin{center}

\begin{tikzpicture}[scale=.75, smooth cycle]
\draw [-, black, thick] (2,1) -- (2,-1) -- (4,1) -- (4,-1) -- (2,1) -- (4,1);
\draw [-, black, thick] (2,-1) -- (4,-1);

\draw [-, black, thick] (-2,1) -- (-2,-1) -- (-4,1) -- (-4,-1) -- (-2,1) -- (-4,1);
\draw [-, black, thick] (-2,-1) -- (-4,-1);

\draw [black,fill] (0,0) circle [radius=0.16] node [right] {~$z$};

\draw [black,fill] (2,1) circle [radius=0.16] node [left] {$y_1$ ~};
\draw [black,fill] (2,-1) circle [radius=0.16] node [left] {$y_2$ ~};
\draw [black,fill] (4,1) circle [radius=0.16] node [right] {~$y_3$};
\draw [black,fill] (4,-1) circle [radius=0.16] node [right] {~$y_4$};

\draw [black,fill] (-2,1) circle [radius=0.16] node [right] {~$x_1$};
\draw [black,fill] (-2,-1) circle [radius=0.16] node [right] {~$x_2$};
\draw [black,fill] (-4,1) circle [radius=0.16] node [left] {$x_3$ ~};
\draw [black,fill] (-4,-1) circle [radius=0.16] node [left] {$x_4$ ~};

\draw [black, thick] plot[tension=0.5] coordinates{(0,-0.5) (2.25,0.65) (2.25,1.35) (-2.25,1.35) (-2.25,0.65)};

\draw [black, thick] plot[tension=0.5] coordinates{(0,0.5) (2.25,-0.65) (2.25,-1.35) (-2.25,-1.35) (-2.25,-0.65)};
\end{tikzpicture}
\end{center}

\caption{Hypergraph with disjoint minimum weak and strong vertex cuts.}
\label{fig:disjoint_cuts}
\end{figure}
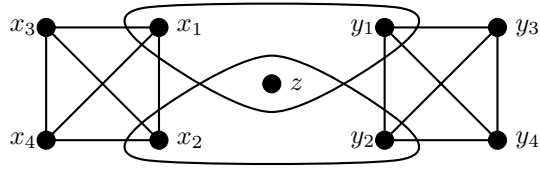

Also observe that the difference $\kappa_W(H) - \kappa_S(H)$ can be arbitrarily large.  As an example, let $n \geq 2$ and consider the hypergraph $H=(V,E)$ where $V=\{ x_1,\ldots,x_n,$ $y_1,\ldots,y_n,z \}$ and $E$ consists of the edges $\{x_1,\ldots,x_n\}$, $\{y_1,\ldots,y_n\}$ and $\{ x_i,y_i,z \}$ for each $i=1,2,\ldots,n$ (see Figure~\ref{fig:arb_diff_cuts}).
Here $\kappa_S(H)=1$ because $\{z\}$ is a strong vertex cut, whereas $\kappa_W(H)=n+1$
(observe that a minimum weak vertex cut is obtained by taking vertex $z$ together with either $x_i$ or $y_i$, for each $i$, so that at least one $x$-vertex and at least one $y$-vertex is selected).
This example illustrates that weak vertex connectivity is a poor approximation for strong vertex connectivity.

\begin{figure}[tbhp]
\begin{center}

\begin{tikzpicture}[scale=.75, smooth cycle]


\draw [black,fill] (0,0) circle [radius=0.16] node [right] {~$z$};

\draw [black,fill] (-2.5,1) circle [radius=0.16] node [below right] {~$x_1$ ~};
\draw [black,fill] (-2.5,0) circle [radius=0.16] node [right] {~$x_2$ ~};
\draw [black,fill] (-2.5,-0.6) circle [radius=0.05];
\draw [black,fill] (-2.5,-1) circle [radius=0.05];
\draw [black,fill] (-2.5,-1.4) circle [radius=0.05];
\draw [black,fill] (-2.5,-2) circle [radius=0.16] node [above right] {~$x_n$ ~};

\draw [black,fill] (2.5,1) circle [radius=0.16] node [below left] {~$y_1$ ~};
\draw [black,fill] (2.5,0) circle [radius=0.16] node [left] {~$y_2$ ~};
\draw [black,fill] (2.5,-0.6) circle [radius=0.05];
\draw [black,fill] (2.5,-1) circle [radius=0.05];
\draw [black,fill] (2.5,-1.4) circle [radius=0.05];
\draw [black,fill] (2.5,-2) circle [radius=0.16] node [above left] {~$y_n$ ~};

\draw [black, thick] plot[tension=0.5] coordinates{(1.6,1.25) (3,1.25) (3, -2.25) (1.6,-2.25)};
\draw [black, thick] plot[tension=0.5] coordinates{(-1.6,1.25) (-3,1.25) (-3, -2.25) (-1.6,-2.25)};

\draw [black, thick] plot[tension=0.5] coordinates{(0,-0.4) (2.75,0.75)  (2.5,1.5) (0,0.5) (-2.5,1.5)  (-2.75,0.75)};

\draw [black, thick] plot[tension=0.5] coordinates{(0,-0.3) (2.75, -0.2) (2.75,0.2)  (0,0.3)  (-2.75,0.2) (-2.75,-0.2)};

\draw [black, thick] plot[tension=0.5] coordinates{(0,0.4) (2.75,-1.85)  (2.1,-2.4) (0,-0.5) (-2.1,-2.4)   (-2.75,-1.85)};

\end{tikzpicture}
\end{center}

\caption{Hypergraph for which the difference between the weak vertex connectivity and strong vertex connectivity is $n$.
As $n$ increases, an infinite family of hypergraphs for which this difference grows linearly is obtained.}
\label{fig:arb_diff_cuts}
\end{figure}
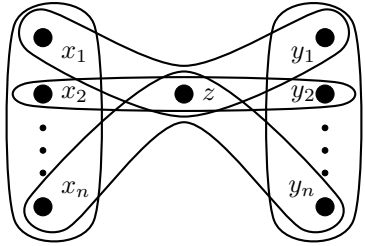

\subsection{Extending a result of Whitney}

In 1932 Whitney \cite{whitney} showed that for any nontrivial graph $G$ the size of a minimum vertex cut is at most the size of a minimum edge cut, which in turn is at most the minimum degree of any vertex, i.e., $\kappa(G) \leq \kappa'(G) \leq \delta(G)$.
In this section we will show that this result can be generalized to hypergraphs.
Note, however, that the generalization does not involve a comparison of $\kappa_W(G)$ with $\kappa'_W(G)$
since there exists a hypergraph $H$ such that $\kappa_W(H) > \kappa'_W(H)$ (see Figure~\ref{fig:KappaPrimeSmaller}),
as well as a hypergraph $H$ such that $\kappa_W(H) < \kappa'_W(H)$
(for instance, if $H=(V,E)$ where $V=\{x_1,x_2,x_3,y_1,y_2,y_3,z\}$ and
$E =
\big\{ \{x_i,x_j,z\} \,|\, 1 \leq i < j \leq 3 \big\} \cup
\big\{ \{y_i,y_j,z\} \,|\, 1 \leq i < j \leq 3 \big\}$,
then $1=\kappa_W(H)<\kappa'_W(H)=2$).
Likewise, the generalization does not bound $\kappa_S(H)$ by $\kappa'_S(H)$ since there exists a hypergraph $H$
such that $\kappa_S(H) < \kappa'_S(H)$ (for instance the hypergraph in Figure~\ref{fig:disjoint_cuts} without the edges $\{x_1,x_2\}$ and $\{y_1,y_2\}$),
as well as a hypergraph $H$ such that
$\kappa_S(H) > \kappa'_S(H)$
(for instance,  if $H=(V,E)$ with $V=\{0,1,2,3,4,5,6,3',4',5',6'\}$
and $E=\big\{
\{0,1,2\},$
$\{0,3,6\},$
$\{0,4,5\},$
$\{1,3,4\},$
$\{1,5,6\},$
$\{2,3,5\},$
$\{2,4,6\},$
$\{0,3',6'\},$
$\{0,4',5'\},$
$\{1,3',4'\},$
$\{1,5',6'\},$
$\{2,3',5'\},$
$\{2,4',6'\}
\big\}$,
then $\kappa'_S(H)=1$ since $\big\{ \{0,1,2\} \big\}$ is a strong disconnecting set, and $\kappa_S(H)=3$).
The generalization of Whitney's result that we establish involves strong vertex connectivity and weak edge connectivity.

\begin{figure}[htbp]

\begin{center}
\begin{tikzpicture}[scale=.75, smooth cycle]
\foreach \x in {0,2,4,6}
   {
   \draw [black,fill] (\x,0.3) circle [radius=0.16];
   \draw [black,fill] (\x,1.7) circle [radius=0.16];
   }
\foreach \x in {1,3,5}
   {
   \draw [black, thick] (\x,1) ellipse (48pt and 43pt);
   }

\end{tikzpicture}
\end{center}

\caption{Hypergraph $H$ with $2 = \kappa_W(H) > \kappa'_W(H) = 1$.}
\label{fig:KappaPrimeSmaller}
\end{figure}
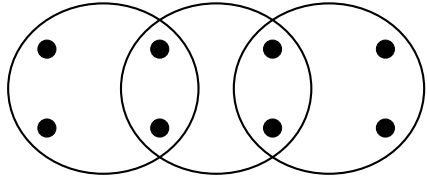

Since both vertex degrees and weak disconnecting sets are affected by repeated vertices in edges, repeated edges, and edges of cardinality one,
hypergraphs that have any or all of these will be permitted in Lemma~\ref{lemma_kappa_delta}
and Theorem~\ref{thm:WhitneyHypergraph}.

\begin{lemma} \label{lemma_kappa_delta}
Let $H=(V,E)$ be a nontrivial hypergraph with minimum degree $\delta(H)$.
Then $\kappa'_W(H) \leq \delta(H)$.
\end{lemma}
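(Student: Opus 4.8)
The plan is to exhibit, for a minimum-degree vertex $v$, an explicit weak edge cut of size $\deg_H(v)$, namely the set of edges incident to $v$. Since $\kappa'_W(H)$ for nontrivial $H$ equals the size of a minimum weak edge cut (as noted just before the lemma, using Cheng's observation), it suffices to show that some weak edge cut has at most $\delta(H)$ edges.

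Concretely, let $v \in V$ be a vertex with $\deg_H(v) = \delta(H)$. Consider $X = \{v\}$ and the associated weak edge cut $\partial(\{v\}) = \{ e \in E \mid v \in e, \ e \cap (V \setminus \{v\}) \neq \emptyset \}$. Every edge in $\partial(\{v\})$ is incident to $v$, and an edge $e$ with $v \in e$ contributes at least $m_e^H(v) \geq 1$ to $\deg_H(v)$; hence $|\partial(\{v\})| \leq \sum_{i \in I : v \in e_i} m_{e_i}^H(v) = \deg_H(v) = \delta(H)$. The one subtlety is whether $\partial(\{v\})$ is actually a disconnecting set, i.e., whether $X = \{v\}$ is a proper subset of $V$ with $V \setminus X \neq \emptyset$ — this holds because $H$ is nontrivial, so $|V| \geq 2$. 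Removing $\partial(\{v\})$ from $H$ leaves $v$ in no edge of size $\geq 2$ (any surviving edge containing $v$ would lie entirely in $\{v\}$), so $v$ is separated from every other vertex; thus $H \setminus_W \partial(\{v\})$ is disconnected and $\partial(\{v\})$ is a weak disconnecting set.

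One should also dispose of a degenerate case: if $H$ is nontrivial but disconnected, then $\kappa'_W(H) = 0 \leq \delta(H)$ trivially (the empty set is a weak disconnecting set), so we may assume $H$ is connected; and if $\delta(H) = 0$ then $H$ has an isolated vertex, forcing $|V| = 1$ by connectedness, contradicting nontriviality — so in the connected case $\delta(H) \geq 1$ and the construction above goes through, possibly with the caveat that if $v$ happens to lie in no edge of size $\geq 2$ then $H$ is already disconnected (again giving $\kappa'_W(H) = 0$).

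**The main obstacle** is essentially bookkeeping rather than a genuine difficulty: one must be careful that "the edges incident to $v$" is counted with multiplicity matching $\deg_H(v)$, and that edges containing $v$ with multiplicity greater than one, or parallel edges, or edges equal to $[v; t]$, are all handled correctly by the multiset conventions — this is precisely why the lemma's preamble explicitly permits such hypergraphs. No flow/cut machinery is needed; the bound $\kappa'_W(H) \leq |\partial(\{v\})| \leq \deg_H(v) = \delta(H)$ follows directly from the definition of weak edge cut and the remark identifying $\kappa'_W$ with the minimum weak edge cut size.
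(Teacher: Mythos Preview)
Your proof is correct and follows essentially the same approach as the paper: both take a vertex $v$ of minimum degree and observe that the set of edges incident to $v$ having at least one other vertex (your $\partial(\{v\})$, the paper's $\{e \in E \mid v \in e,\ |supp(e)| \geq 2\}$) is a weak disconnecting set of size at most $\deg_H(v) = \delta(H)$. The paper's version is terser, but your added bookkeeping about multiplicities and degenerate cases is accurate and does no harm.
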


\begin{proof}
Let $v \in V$ be a vertex of degree $\delta(H)$.
Then $\big\{ e \in E \,\big|\, v \in e, |supp(e)| \geq 2 \big\}$ is a weak disconnecting set of size at most $\delta(H)$.
\end{proof}

\begin{thm} \label{thm:WhitneyHypergraph}
Let $H=(V,E)$ be a nontrivial hypergraph with minimum degree $\delta(H)$.
Then $\kappa_S(H) \leq \kappa'_W(H) \leq \delta(H)$.
\end{thm}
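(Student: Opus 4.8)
The plan is to prove the two inequalities separately. The right-hand inequality $\kappa'_W(H) \le \delta(H)$ is already established in Lemma~\ref{lemma_kappa_delta}, so it suffices to prove $\kappa_S(H) \le \kappa'_W(H)$. First I would dispose of the degenerate cases: if $H$ is null or trivial, then by convention $\kappa_S(H) = \kappa'_W(H) = 1$; if $H$ is disconnected, then $\kappa_S(H) = 0 \le \kappa'_W(H)$. So assume $H$ is nontrivial and connected. Using the remark following Definition~\ref{Defn-WeakEdgeConnectivity} (attributed to Cheng), $\kappa'_W(H)$ equals the size of a minimum weak edge cut $\partial(X)$ for some nonempty proper $X \subset V$; fix such an $X$ achieving the minimum, and write $F = \partial(X)$, so $|F| = \kappa'_W(H)$.

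The key idea is to extract, from the weak edge cut $F$, a strong vertex cut of size at most $|F|$. Each edge $e \in F$ meets both $X$ and $V \setminus X$; I would choose for each such $e$ a representative vertex $x_e \in e \cap X$, and let $T = \{ x_e \mid e \in F \}$. Then $|T| \le |F| = \kappa'_W(H)$. I claim $T$ is a strong vertex cut of $H$, i.e.\ that $H \setminus_S T$ is disconnected (provided we are not in a trivial boundary situation — see the obstacle below). The reason: after strongly deleting $T$, every edge of $F$ is destroyed (since each $e \in F$ contains its representative $x_e \in T$), so the only edges surviving in $H \setminus_S T$ lie entirely within $X$ or entirely within $V \setminus X$. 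Hence any vertex of $X \setminus T$ and any vertex of $V \setminus X$ lie in different connected components of $H \setminus_S T$, and both sets are nonempty, giving disconnection. Therefore $\kappa_S(H) \le |T| \le \kappa'_W(H)$.

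The main obstacle is the boundary case where $X \setminus T$ or $(V \setminus X) \setminus T$ turns out to be empty — for instance, if $|X| = |F|$ and every vertex of $X$ gets chosen as a representative, then $H \setminus_S T$ might have all of $X$ deleted, leaving $V \setminus X$ possibly still connected and nontrivial, which is \emph{not} a disconnected hypergraph in our sense. To handle this I would argue more carefully about the choice of $X$: among all minimum weak edge cuts $\partial(X)$, one can take $X$ to be a connected component of $H \setminus_W (\text{something})$, or simply take $X$ to be (the vertex set of) a single connected component of $H \setminus_W F$ where $F$ is a minimal weak disconnecting set of minimum size; minimality of $F$ forces each edge of $F$ to have a vertex outside $X$ \emph{and} — by a short argument — the component structure to be nontrivial enough that the representative set cannot swallow all of one side. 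Alternatively, and perhaps more cleanly, one picks the representatives $x_e$ not all from the $X$-side: if choosing from $X$ would exhaust $X$, swap to choosing that representative from $V \setminus X$, and a counting/pigeonhole argument shows at least one side survives. I expect this case analysis, rather than the main construction, to be where the real work lies, and I would present it as the crux of the proof.
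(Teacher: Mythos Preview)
Your approach is essentially the paper's: extract a strong vertex cut from a minimum weak disconnecting set $F$ by choosing, for each edge of $F$, a representative vertex that hits it. You also correctly locate the obstacle --- the representatives might exhaust one side of the bipartition --- and you are right that this is where the real work lies. But your proposed resolutions are not sufficient as stated, and one key ingredient is missing entirely.

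The missing ingredient is the escape clause $\kappa_S(H) \leq |V|-1$, which holds by definition. Without it the argument cannot be completed: for $H = K_3$ with $X$ a single vertex, \emph{every} set $T$ meeting each edge of $F = \partial(X)$ exhausts one side, so no strong vertex cut arises from your construction; the inequality $\kappa_S \leq \kappa'_W$ holds there only because $\kappa_S(K_3) = |V|-1 = 2 = \kappa'_W(K_3)$. The paper therefore first disposes of the case $|F| \geq |V|-1$ and only then builds $T$. Even after that, your ``swap sides plus pigeonhole'' suggestion does not cover the situation where \emph{both} $|X| \leq |F|$ and $|V \setminus X| \leq |F|$. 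The paper's actual resolution is to fix vertices $v_1 \in X$ and $v_2 \in V \setminus X$ such that the two-element edge $\{v_1,v_2\}$ does not lie in $F$; then every $e \in F$, having at least two vertices and meeting both sides, contains some vertex of $V \setminus \{v_1,v_2\}$, and a greedy choice of such vertices gives $T \subseteq V \setminus \{v_1,v_2\}$ with $v_1$ and $v_2$ both surviving. Such a pair fails to exist only when $F$ contains every cross pair as a size-two edge, forcing $|F| \geq |X|\cdot|V\setminus X| \geq |X| + |V \setminus X| - 1 = |V|-1$, already handled. (The paper also observes that if $H \setminus_W F$ has $k \geq 3$ components, minimality of $F$ makes each edge of $F$ meet every component, so $T$ can be taken entirely inside a third component --- a clean shortcut your bipartition-based setup does not see.)
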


\begin{proof}
Since Lemma~\ref{lemma_kappa_delta} shows that $\kappa'_W(H) \leq \delta(H)$, it only remains to prove that $\kappa_S(H) \leq \kappa'_W(H)$.
Clearly $\kappa_S(H) \leq \kappa'_W(H)$ when $\kappa_S(H)=0$,
so we henceforth assume that $\kappa_S(H) \geq 1$, which is to say that $H$ is connected.

Let $F$ be a minimum weak disconnecting set of $H$. As $\kappa_S(H) \leq |V|-1$ by definition, if $|F| \geq |V| -1$, the result holds. Thus we shall now assume that $|F| < |V| -1$, and we will show the existence of a strong vertex cut of size at most $|F|$.

As $F$ is a weak disconnecting set, the hypergraph $H \setminus_W F $ is disconnected. Let $H_1,H_2,\dots , H_k$ be the connected components of $H \setminus_W F$. Furthermore, for $i = 1,2,\dots, k$, let $V_i$ be
the vertex set of $H_i$ (note that $\bigcup_{i=1}^{k} V_i = V$) and let $W_i$ be the subset of $V_i$ containing those vertices of $V_i$ that are
incident to at least one edge in $F$. Observe that $W_i \neq \emptyset$ for all $i$.

By the minimality of $F$ we know that every edge of $F$ intersects every $W_i$. This implies that for each $i$ there exists a set $Z_i \subseteq W_i$ such that $|Z_i| \leq |F|$ and $Z_i$ intersects every edge of $F$. The sets $Z_i$ can be found by a greedy approach:  start with $Z_i = \emptyset$ and while there exists an edge $f \in F$ that does not intersect $Z_i$ select any vertex in $f \cap W_i$ and add it to $Z_i$.

Suppose that $k \geq 3$. Then strongly deleting the vertices of $Z_3$ will delete all the edges of $F$ and separate the vertices in $V_1$ from those in $V_2$. Therefore when $k \geq 3$ there exists a strong vertex cut of size at most $|F|$.

Now assume that $k=2$
and
suppose that there exists a vertex $v \in V_1 \setminus Z_1$. Then $Z_1$ is a strong vertex cut that separates $v$ from $V_2$ and we again have a strong vertex cut of size at most $|F|$. Similarly, such a strong vertex cut also exists if there is a vertex $v \in V_2 \setminus Z_2$.

We are now in the case where $k=2$, $V_1 = W_1 = Z_1$ and $V_2 = W_2 = Z_2$.
Suppose that there exists a pair of vertices $v_1 \in V_1$ and $v_2 \in V_2$ such that $F$ does not contain an edge of cardinality two containing these two vertices. Then we can greedily find a set $Z \subseteq  (V_1 \setminus \{v_1\})\cup (V_2 \setminus \{v_2\}) = V \setminus \{v_1,v_2\}$ of size at most $|F|$ that intersects every edge of $F$:  start with $Z = \emptyset$ and while there exists an edge $f \in F$ that does not intersect $Z$, select any vertex in $f \cap (V \setminus \{v_1,v_2\})$ and add it to $Z$.
Strongly deleting the vertices of $Z$ will delete all the edges of $F$ and thus separate $v_1$ and $v_2$. Therefore, if such vertices $v_1$ and $v_2$ exist, then we have a strong vertex cut of size at most $|F|$.
Otherwise, $F$ contains the edge $\{v_1,v_2\}$ for all pairs of vertices $v_1 \in V_1$ and $v_2 \in V_2$.

Thus $|V| = |V_1| + |V_2|$ and $F$ contains at least $|V_1| \cdot |V_2|$ edges. Since $|V_1|$ and $|V_2|$ are positive integers, $|V| -1 = |V_1| + |V_2| -1 \leq |V_1| \cdot |V_2| \leq |F|$. However, this contradicts the assumption that $|F| < |V| -1$ and so the result holds.
\end{proof}

\subsection{Minimum transversals versus strong vertex connectivity} \label{section_transversals}

A \textbf{transversal} of a hypergraph $H=(V,E)$ is a subset $T \subseteq V$ such that $T$ has nonempty intersection with every nonempty edge of $H$.
The size of a smallest transversal of a hypergraph $H$ is called the \textbf{transversal number} of $H$ and is denoted $\tau(H)$.
For more details on transversals see~\cite{duchet}.

Note that a subset $T \subseteq V$ is a transversal of $H=(V,E)$ if and only if strongly deleting all vertices in $T$ results in a (possibly null) hypergraph with no nonempty edges. Furthermore, if $W$ is the set of vertices that appear in edges of cardinality 1 then $W$ is a subset of every transversal of $H$, and $T$ is a transversal of $H$ if and only if $T \setminus W$ is a transversal of $H \setminus_S W$.
We now
explore the relationship between the transversal number of a hypergraph $H$ and its strong vertex connectivity.

Consider a connected nontrivial hypergraph $H=(V,E)$ with no edges of cardinality~1 and let $V_{\tau}$ and $V_{\kappa_S}$ be a minimum transversal and a minimum strong vertex cut of $H$, respectively. Then $V_{\tau}$ is a smallest set of vertices whose strong deletion results in {all} (possibly zero) pairs of remaining vertices being separated from each other, while $V_{\kappa_S}$ is a smallest set of vertices whose strong deletion results in at least {one} pair of remaining vertices becoming separated. Thus any transversal of $H$ of size at most $|V| -2$ is also a strong vertex cut of $H$. Combining this with the facts that $\kappa_S(H) \leq |V| -1$ (by definition), edges of cardinality 1 do not affect $\kappa_S(H)$ and cannot decrease $\tau(H)$, and $\tau(H)$ being well defined for any $H$ (because $V$ is always a transversal), proves the following lemma.

\begin{lemma}
If $H=(V,E)$ is a nontrivial hypergraph, then $\kappa_S(H) \leq \tau(H)$.
\end{lemma}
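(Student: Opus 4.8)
The plan is to run a short case analysis, first on whether $H$ is connected and then on how large a minimum transversal is compared with $|V|$. If $H$ is disconnected, then by the stated convention $\kappa_S(H)=0$, and since $\tau(H)\ge 0$ the inequality is immediate. So all the real work lies in the case that $H$ is connected; as $H$ is nontrivial we have $|V|\ge 2$ there.

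Assume then that $H$ is connected with $|V|\ge 2$, and fix a minimum transversal $T$ of $H$, so $|T|=\tau(H)$ (this exists since $V$ is itself a transversal). If $\tau(H)\ge |V|-1$, I would simply invoke the fact, recorded with the definition of $\kappa_S$, that $\kappa_S(H)\le |V|-1$ always holds, which gives $\kappa_S(H)\le |V|-1\le \tau(H)$. The remaining subcase is $\tau(H)\le |V|-2$, and here I claim $T$ is itself a strong vertex cut. Indeed, since $T$ meets every nonempty edge, every edge of $H\setminus_S T$ is empty; but an empty edge cannot contain a $2$-element multiset $[v,w]$ with $v\ne w$, so no two distinct vertices of $H\setminus_S T$ are adjacent, and hence none are joined by a path. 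Since $H\setminus_S T$ has at least $|V|-|T|\ge 2$ vertices, it is disconnected, so $T$ is a strong vertex cut and $\kappa_S(H)\le |T|=\tau(H)$.

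The argument has little technical depth; the only point that needs care is the extremal regime. Strongly deleting a transversal of size $|V|-1$ or $|V|$ need not disconnect anything, since it may leave a trivial or null hypergraph, each of which is connected by convention; this is exactly why the case split at $\tau(H)=|V|-1$ is required, with the definitional bound $\kappa_S(H)\le|V|-1$ supplying the conclusion there. Edges of cardinality $1$ need no special treatment in this approach: such an edge $\{x\}$ merely forces $x$ into every transversal and is destroyed by strong deletion of any transversal, so the argument of the previous paragraph applies verbatim (this matches the earlier observation that $\kappa_S$ is unaffected by edges of cardinality less than $2$, while $\tau$ can only increase when such edges are added).
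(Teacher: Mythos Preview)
Your proof is correct and follows essentially the same approach as the paper: both argue that a transversal of size at most $|V|-2$ is a strong vertex cut (since strongly deleting it leaves at least two vertices and no nonempty edges), and both handle the remaining extremal regime via the definitional bound $\kappa_S(H)\le |V|-1$. The only cosmetic difference is that the paper phrases the treatment of cardinality-$1$ edges as ``they do not affect $\kappa_S(H)$ and cannot decrease $\tau(H)$'', whereas you observe directly that such edges are automatically destroyed by strong deletion of any transversal; these are equivalent.
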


Even though $\tau(H)$ is only an upper bound on $\kappa_S(H)$,
minimum transversals can be used to calculate the strong vertex connectivity of a hypergraph.
Given two distinct vertices $u$ and $v$ in a nontrivial hypergraph $H=(V,E)$, let $H'_{u,v}$ be the hypergraph with $V(H'_{u,v})=V \setminus \{u,v\}$
and
\[E(H'_{u,v}) = \big\{ supp\big(\bigcup_{e \in E(P)} e \big) \setminus \{u,v\}  \,\big|\, P \mbox{ is a } (u,v)\mbox{-path in } H \big\}\]
so the edges of $H'_{u,v}$ are the sets comprised of the vertices, other than $u$ and $v$, that are in the edges of the individual $(u,v)$-paths in $H$.
By the definition of $\kappa_S(H,u,v)$, we have that $\kappa_S(H) = \min \big\{\kappa_S(H,u,v) \,\big|\, {u,v \in V} \big\}$. Note that $\kappa_S(H,u,v) = |V| -1$ when $\{u,v\} \in E$ and that $\kappa_S(H,u,v)$ is equal to the size of a minimum transversal in $H'_{u,v}$ when $\{u,v\} \not \in E$.
However, to rely upon this approach as a means of computing $\kappa_S(H)$ would require finding
$supp\big(\bigcup_{e \in E(P)} e \big)$ for each $(u,v)$-path $P$ in $H$,
which may not be practical.

Later in this paper we will consider the complexity of calculating $\kappa_S(H)$ for various classes of hypergraphs. As we shall see in Section~\ref{section_strong_vs_trans},
although there are similarities between transversals and strong vertex cuts, there exist classes of hypergraphs for which calculating $\tau(H)$ is NP-hard while calculating $\kappa_S(H)$ is in P, and there exist other classes of hypergraphs for which calculating $\tau(H)$ is in P while calculating $\kappa_S(H)$ is NP-hard.

\subsection{Complexity of weak vertex connectivity}
Bahmanian and {\v S}ajna showed in~\cite{bahmanian}
that weak cut vertices of hypergraphs can be found by looking for cut vertices in their associated incidence graphs, which in turn
can be found in polynomial time (see~\cite[page~42]{NagamochiIbaraki}).
Thus, the problem of finding weak cut vertices of hypergraphs is in P. We shall now show that finding minimum weak vertex cuts of hypergraphs is also in P.
We begin with a lemma, the proof of which is an obvious consequence of vertices being adjacent in a hypergraph if and only if they are also adjacent
in the hypergraph's 2-section.

\begin{lemma}\label{lemma_connected_2section}
Let $H=(V,E)$ be a hypergraph and let $u,v \in V$.
Then the vertices $u$ and $v$ are connected in $H$ if and only if they are connected in the 2-section $[H]_2$.
\end{lemma}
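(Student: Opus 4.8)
The plan is to prove both directions directly from the definition of connectedness via paths, using the fact that $u$ and $v$ are adjacent in $H$ exactly when $[u,v]\subseteq e_i$ for some edge $e_i$, which (by the definition of the 2-section $[H]_2=(V,E_2)$) is precisely the condition for $\{u,v\}\in E_2$. So adjacency is preserved in both directions between $H$ and $[H]_2$, and connectedness is the transitive-type closure of adjacency along a walk, so it too should be preserved.

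For the forward direction, suppose $u$ and $v$ are connected in $H$, so there is a $(u,v)$-path $v_1,e_1,v_2,\ldots,e_s,v_{s+1}$ in $H$ with $v_1=u$ and $v_{s+1}=v$. For each $j$ the defining condition $[v_j,v_{j+1}]\subseteq e_j$ gives that $v_j$ and $v_{j+1}$ are adjacent in $H$, hence $\{v_j,v_{j+1}\}\in E_2$ (this also covers the degenerate possibility $v_j=v_{j+1}$, which would just be a loop in $[H]_2$ and contributes nothing, but in a path the vertices are distinct anyway). Thus $v_1,v_2,\ldots,v_{s+1}$ is a walk in $[H]_2$ from $u$ to $v$; since $[H]_2$ is a graph, any walk between two vertices contains a path between them, so $u$ and $v$ are connected in $[H]_2$. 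Conversely, suppose $u$ and $v$ are connected in $[H]_2$ via a path $u=w_1,w_2,\ldots,w_{t+1}=v$. For each edge $\{w_j,w_{j+1}\}\in E_2$, by definition of the 2-section there is an edge $e^{(j)}\in E$ with $[w_j,w_{j+1}]\subseteq e^{(j)}$; hence $w_1,e^{(1)},w_2,e^{(2)},\ldots,e^{(t)},w_{t+1}$ is a walk in $H$ from $u$ to $v$, and again this walk contains a $(u,v)$-path, so $u$ and $v$ are connected in $H$.

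There is essentially no obstacle here: the only point requiring a word of care is the standard but needed fact that in a hypergraph (as in a graph) the existence of a $(u,v)$-walk is equivalent to the existence of a $(u,v)$-path, obtained by the usual shortcutting argument of deleting the portion of the walk between any repeated vertex. One should also note that loops in $[H]_2$ (which arise only from repeated vertices inside an edge, i.e.\ $[v;2]\subseteq e_i$) are irrelevant to connectivity, so the presence of loops in $[H]_2$ causes no difficulty. With these remarks the lemma follows immediately.
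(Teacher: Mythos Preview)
Your proof is correct and is essentially the same as the paper's approach: the paper simply observes that the lemma is an obvious consequence of the fact that two vertices are adjacent in $H$ if and only if they are adjacent in $[H]_2$, and your argument spells out exactly this correspondence at the level of paths and walks.
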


We now show that a hypergraph's weak vertex connectivity is the same as the connectivity of its 2-section graph.

\begin{lemma} \label{lemma_weak_cuts_cut_2section}
Let $H=(V,E)$ be a hypergraph and $X \subseteq V$.
Then $X$ is a weak vertex cut of $H$ if and only if $X$ is a vertex cut of $[H]_2$.
Therefore $\kappa_W(H) = \kappa([H]_2)$.
\end{lemma}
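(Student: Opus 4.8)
The plan is to reduce the statement to the single identity $[\,H\setminus_W X\,]_2 = [H]_2\setminus_S X$ between graphs (strong deletion from a graph being the ordinary operation of deleting vertices together with their incident edges), and then to invoke Lemma~\ref{lemma_connected_2section}. Both of these graphs have vertex set $V\setminus X$, so it suffices to compare their edge sets. For distinct $v,w\in V\setminus X$, the pair $\{v,w\}$ is an edge of $[\,H\setminus_W X\,]_2$ exactly when some edge of $H\setminus_W X$ (that is, some $e_i$ with all copies of the vertices of $X$ removed) contains both $v$ and $w$; since $v,w\notin X$, the weak deletion of $X$ does not change the multiplicities of $v$ and $w$ inside $e_i$, so this holds if and only if $[v,w]\subseteq e_i$ for some $i\in I$, if and only if $\{v,w\}$ is an edge of $[H]_2$, if and only if (as $v,w\notin X$) $\{v,w\}$ is an edge of $[H]_2\setminus_S X$. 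The analogous check handles loops, which in any case do not affect connectivity, so the identity holds.

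First I would combine this identity with Lemma~\ref{lemma_connected_2section} applied to the hypergraph $H\setminus_W X$: for all $u,v\in V\setminus X$, the vertices $u$ and $v$ are connected in $H\setminus_W X$ if and only if they are connected in $[\,H\setminus_W X\,]_2 = [H]_2\setminus_S X$. Hence $H\setminus_W X$ is disconnected if and only if $[H]_2\setminus_S X$ is disconnected (when $|V\setminus X|\leq 1$ both are connected, and when $|V\setminus X|\geq 2$ each side asserts the existence of a separated pair of vertices in $V\setminus X$, and these coincide by the previous sentence). By the definitions of weak vertex cut and of vertex cut, this says precisely that $X$ is a weak vertex cut of $H$ if and only if $X$ is a vertex cut of $[H]_2$.

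It then remains to deduce $\kappa_W(H)=\kappa([H]_2)$. If $H$ is null or trivial, both sides equal $1$ by the conventions adopted for $\kappa_W$ and, following~\cite{BondyMurty2008}, for graph connectivity. If $H$ is nontrivial and possesses a weak vertex cut, then by the correspondence just established its weak vertex cuts are exactly the vertex cuts of $[H]_2$, so a minimum one of each kind has the same cardinality. If $H$ is nontrivial and has no weak vertex cut, then $[H]_2$ has no vertex cut; taking $X=V\setminus\{u,v\}$ for arbitrary distinct $u,v\in V$ then forces $\{u,v\}$ to be an edge of $[H]_2$, so $[H]_2$ is complete on $|V|$ vertices and $\kappa([H]_2)=|V|-1=\kappa_W(H)$.

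I expect the only delicate points to be the multiplicity bookkeeping in the 2-section identity and the treatment of the degenerate cases (no cut exists, or $H$ null or trivial). Once the identity is in place, the correspondence between weak vertex cuts of $H$ and vertex cuts of $[H]_2$ is an immediate consequence of Lemma~\ref{lemma_connected_2section}.
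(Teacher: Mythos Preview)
Your proposal is correct and follows essentially the same approach as the paper: both hinge on the identity $[\,H\setminus_W X\,]_2 = [H]_2\setminus X$ and then invoke Lemma~\ref{lemma_connected_2section}. The paper is terser (it simply ``observes'' the identity and concludes via a count of connected components), whereas you spell out the multiplicity check and the degenerate cases for $\kappa_W(H)=\kappa([H]_2)$, but the underlying argument is the same.
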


\begin{proof}
Take any $X \subset V$.
Observe that $[H \setminus_W X]_2 = [H]_2 \setminus X$, and thus by Lemma~\ref{lemma_connected_2section},
$c(H \setminus_W X) = c([H \setminus_W X]_2) = c([H]_2 \setminus X)$.
Hence $X$ is a weak vertex cut of $H$ if and only if $X$ is a vertex cut of $[H]_2$, and $\kappa_W(H) = \kappa([H]_2)$.
\end{proof}

\begin{thm} \label{thm_kappa_w_poly}
Determining $\kappa_W(H)$ for a hypergraph $H$ and finding a minimum weak vertex cut of $H$ are in P.
\end{thm}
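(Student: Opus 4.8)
The plan is to reduce everything to graph connectivity via the 2-section and then invoke a known polynomial-time algorithm. By Lemma~\ref{lemma_weak_cuts_cut_2section} we have $\kappa_W(H) = \kappa([H]_2)$, and moreover $X \subseteq V$ is a weak vertex cut of $H$ if and only if it is a vertex cut of $[H]_2$; so any minimum vertex cut of $[H]_2$ is a minimum weak vertex cut of $H$. Thus the task is twofold: (i) show $[H]_2$ can be constructed from $H$ in polynomial time, and (ii) cite that $\kappa(G)$ and a witnessing minimum vertex cut of a graph $G$ can be computed in polynomial time.

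For (i), I would observe that $[H]_2=(V,E_2)$ where $[v,w]\in E_2$ exactly when some edge $e_i\in E$ satisfies $[v,w]\subseteq e_i$. Building $E_2$ requires, for each edge $e_i$, enumerating the pairs of its vertices, which is $O(|e_i|^2)$ work per edge; summing over all edges this is polynomial in the input size of $H$ (indeed bounded by $\sum_{i\in I}|e_i|^2$, which is at most the square of the encoding length of $H$). Loops in $[H]_2$, arising from vertices repeated within an edge or from singleton edges, are irrelevant to connectivity and may simply be discarded; this is consistent with the standing assumption already in force that the hypergraphs under consideration have no repeated vertices within an edge and no edges of size less than~2.

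For (ii), I would simply cite the polynomial-time algorithm for graph connectivity referenced earlier in the paper---for instance \cite[page~42]{NagamochiIbaraki}, or a max-flow min-cut approach such as the Edmonds--Karp algorithm \cite{EdmondsKarp} applied between suitable pairs of vertices---which both determines $\kappa(G)$ and returns a vertex cut of that cardinality. Running this on $G=[H]_2$ and translating the output back through Lemma~\ref{lemma_weak_cuts_cut_2section} yields $\kappa_W(H)$ together with a minimum weak vertex cut of $H$, all in polynomial time. One should also handle the degenerate cases separately: if $H$ is null or trivial then $\kappa_W(H)=1$ by convention, and if $H$ is already disconnected then $\kappa_W(H)=0$ with the empty set serving as a witness; both are detectable in polynomial time (e.g.\ again via connectivity of $[H]_2$).

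I do not expect any genuine obstacle here---the content of the theorem is essentially packaged into Lemma~\ref{lemma_weak_cuts_cut_2section} plus the known graph algorithm---so the only care needed is the bookkeeping: making explicit that the 2-section has size polynomial in the input and that its construction is polynomial, and making explicit that ``finding a minimum weak vertex cut'' (not merely its cardinality) is covered because the cited graph algorithm is constructive and the correspondence of Lemma~\ref{lemma_weak_cuts_cut_2section} is an equality of the families of cuts, not merely of their minimum sizes.
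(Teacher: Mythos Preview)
Your proposal is correct and follows essentially the same approach as the paper: reduce to graph connectivity via Lemma~\ref{lemma_weak_cuts_cut_2section} and the 2-section, then cite a polynomial-time algorithm for graph connectivity (the paper uses~\cite[page~42]{NagamochiIbaraki}). Your additional bookkeeping about constructing $[H]_2$ in polynomial time and handling degenerate cases is more explicit than the paper's two-sentence proof, but the core argument is identical.
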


\begin{proof}
By Lemma~\ref{lemma_weak_cuts_cut_2section}, weak vertex cuts in a hypergraph $H$ correspond to vertex cuts in $[H]_2$, and the problem of finding a minimum vertex cut in a graph is in P
(see~\cite[page~42]{NagamochiIbaraki} for a polynomial time algorithm).
\end{proof}

We now demonstrate that
the problem of finding a minimum weak vertex cut of a hypergraph $H= (V,E)$ can be solved, without considering the 2-section of $H$,
by reducing the problem to finding cuts of minimum capacity in $\binom{|V|}{2}$
weighted directed graphs. The latter problem, as well as the conversion, are polynomial (see ~\cite{EdmondsKarp}).

Given a subset $X$ of vertices of a directed graph $G=(V_G,E_G)$,
we let $\partial(X) = \big\{ (u,v) \in E_G \,\big|\, u \in X, v \in V_G \setminus X \big\}$
denote the subset of all edges of $E_G$ having tails in $X$ and heads not in $X$.
If $u \in X$ and $v \in V_G \setminus X$, then $\partial(X)$ is called a \textbf{$\mathbf{(u,v)}$-cut}.
If each edge $e \in E_G$ is weighted with a capacity $cap(e)$, then the \textbf{capacity} of a cut $C$, denoted $cap(C)$, is $\sum_{e \in C} cap(e)$.

Suppose now that we have a nontrivial hypergraph $H=(V,E)$
such that $V = \{v_1,v_2, \dots, v_n \}$ and $E = [ e_1,e_2, \dots, e_m ]$.  Then
\[ \kappa_W(H) = \min \big\{ \kappa_W(H,v_i, v_j)   \,\big|\,   i,j \in \{1,2,\dots, n\}, i \neq j       \big\}\]
and thus calculating $\kappa_W(H)$ can be reduced to determining  $\kappa_W(H,v_i, v_j)$ for all $\binom{|V|}{2}$ pairs, $\{v_i, v_j\}$, of distinct vertices.
Without loss of generality we consider $i=1$ and $j=n>2 $ (if $n=2$, then there are no weak $(v_1,v_2)$-vertex cuts).

Begin by forming the directed graph $G=(V_G,E_G)$ with
\begin{eqnarray*}
V_G &=& \{v_{1,out},v_{2,in}, v_{2,out}, v_{3,in},  v_{3,out}, \dots,  v_{n-1,in},  v_{n-1,out}, v_{n,in}\} \cup E \mbox{, and} \\
E_G &=& \big\{(v_{i,out},e) \,\big|\,  e \in E, v_i \in e, i \neq n\big\} \cup \big\{(e, v_{i,in}) \,\big|\, e \in E, v_i \in e, i \neq 1\big\} \cup \\
  & & \big\{(v_{i,in},v_{i,out}) \,\big|\, i \in \{2,3, \dots,n-1\}\big\}.
\end{eqnarray*}
The formation of $G$ from $H$ can be thought of as follows: for each vertex $v$ of $H$, create two vertices $v_{in}$ and $v_{out}$, and a directed edge from $v_{in}$ to $v_{out}$;  for each edge $e$ in $H$ create a vertex $e$ in $G$; for each edge $\{v,e\}$ in the incidence graph of $H$ create the two directed edges $(e,v_{in})$ and $(v_{out},e)$; and lastly remove $v_{1,in}$ and $v_{n,out}$.


Put a capacity of 1 on all edges of the form $(v_{i,in},v_{i,out})$ and an infinite capacity on all other edges of $G$.
Standard maximum-flow minimum-cut algorithms
such as the Edmonds-Karp algorithm~\cite{EdmondsKarp}
%
%
can then be used to find a minimum capacity $(v_{1,out},v_{n,in})$-cut in $G$ in polynomial time.
The following lemma implies that such a
$(v_{1,out},v_{n,in})$-cut of minimum capacity in $G$
either produces a minimum weak $(v_1,v_n)$-vertex cut in $H$ or
else there are no weak $(v_1,v_n)$-vertex cuts in $H$.


\begin{lemma}
Let $C_G \subseteq V(G)$ be such that $\partial(C_G)$ is a $(v_{1,out},v_{n,in})$-cut of minimum capacity of $G$.
If $cap(\partial(C_G))$ is infinite, then $\kappa_W(H,v_1, v_n) = |V| -1$.
Otherwise $cap(\partial(C_G)) = \kappa_W(H,v_1, v_n) \leq |V| -2$ and the set $C_H = \{v_i \,|\, v_{i,in} \in C_G, v_{i,out}\not \in C_G\}$ is a minimum weak $(v_1,v_n)$-vertex cut in $H$.
\end{lemma}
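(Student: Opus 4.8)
The plan is to establish a correspondence between weak $(v_1,v_n)$-vertex cuts in $H$ and finite-capacity $(v_{1,out},v_{n,in})$-cuts in $G$, and then to invoke the max-flow min-cut theorem on $G$. The key structural fact is that a $(v_1,v_n)$-path in $H$ corresponds, via the construction of $G$, to a directed $(v_{1,out},v_{n,in})$-path in $G$: if $v_1,e_{i_1},v_{j_1},e_{i_2},\dots,v_n$ is a walk in $H$, then $v_{1,out},e_{i_1},v_{j_1,in},v_{j_1,out},e_{i_2},\dots,v_{n,in}$ is a directed path in $G$, and conversely every directed $(v_{1,out},v_{n,in})$-path in $G$ has this alternating shape because the only edges leaving an ``$e$''-vertex go to ``$v_{in}$''-vertices, the only edge leaving a ``$v_{in}$''-vertex goes to the corresponding ``$v_{out}$''-vertex, and the only edges leaving a ``$v_{out}$''-vertex go to ``$e$''-vertices. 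Since weak deletion of a vertex $v_i$ in $H$ removes $v_i$ from every edge but leaves the edges otherwise intact, weakly deleting $X \subseteq V \setminus \{v_1,v_n\}$ destroys all $(v_1,v_n)$-paths in $H$ precisely when deleting the ``bridge'' edges $\{(v_{i,in},v_{i,out}) : v_i \in X\}$ from $G$ destroys all directed $(v_{1,out},v_{n,in})$-paths in $G$.

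First I would prove one direction: if $X$ is a weak $(v_1,v_n)$-vertex cut of $H$ of size at most $|V|-2$, then the edge set $\{(v_{i,in},v_{i,out}) : v_i \in X\}$ is a $(v_{1,out},v_{n,in})$-cut of $G$ of capacity $|X|$. Indeed, removing these edges from $G$ leaves no directed path from $v_{1,out}$ to $v_{n,in}$, for such a path would, by the alternating-shape observation, project to a $(v_1,v_n)$-walk in $H$ avoiding $X$, hence to a $(v_1,v_n)$-path in $H \setminus_W X$ (delete repeated vertices/edges from the walk), contradicting that $X$ separates $v_1$ and $v_n$. It follows that $\kappa_W(H,v_1,v_n) \geq cap(\partial(C_G))$ whenever the latter is finite, and conversely if $\kappa_W(H,v_1,v_n) \leq |V|-2$ then there is a finite cut, so $cap(\partial(C_G))$ is finite in that case.

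For the reverse direction, suppose $\partial(C_G)$ has finite capacity. Since all non-bridge edges have infinite capacity, $\partial(C_G)$ consists solely of bridge edges $(v_{i,in},v_{i,out})$, and we set $C_H = \{v_i : v_{i,in} \in C_G,\ v_{i,out} \notin C_G\}$, so that $|C_H| = cap(\partial(C_G))$; note $v_1 \notin C_H$ (as $v_{1,in}$ does not exist, only $v_{1,out} \in C_G$ by definition of the source side) and $v_n \notin C_H$, so $C_H \subseteq V \setminus\{v_1,v_n\}$. I claim $C_H$ is a weak $(v_1,v_n)$-vertex cut of $H$: any $(v_1,v_n)$-path in $H \setminus_W C_H$ would lift to a directed $(v_{1,out},v_{n,in})$-path in $G$ using only bridge edges $(v_{i,in},v_{i,out})$ with $v_i \notin C_H$; such a path starts in $C_G$ and ends outside $C_G$, hence must traverse some edge of $\partial(C_G)$, i.e. some bridge edge with $v_i \in C_H$ — a contradiction. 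Combining both directions with the max-flow min-cut theorem applied to $G$ (which guarantees $\partial(C_G)$ is a genuine minimum cut computed by the Edmonds--Karp algorithm) yields $cap(\partial(C_G)) = \kappa_W(H,v_1,v_n)$ when finite, with $C_H$ a minimum weak $(v_1,v_n)$-vertex cut; and when $cap(\partial(C_G))$ is infinite there is no weak $(v_1,v_n)$-vertex cut of size $\leq |V|-2$, so by convention $\kappa_W(H,v_1,v_n) = |V|-1$.

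The main obstacle I anticipate is the careful bookkeeping around the deleted vertices $v_{1,in}$ and $v_{n,out}$ and verifying that a minimum cut $\partial(C_G)$ never needs to use an infinite-capacity edge — this requires observing that the bridge edges already form a cut of finite capacity at most $|V|-2$ whenever any finite cut exists, so a minimum cut must avoid infinite edges — together with the routine but slightly fussy translation between hypergraph walks and directed paths in $G$ (handling repeated vertices within an edge, edges of size less than 2, and converting walks to paths). None of these is deep, but they must be stated cleanly.
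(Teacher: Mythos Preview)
Your proposal is correct and follows essentially the same approach as the paper: establish the path correspondence between $H$ and $G$, then prove the two inequalities by lifting a weak $(v_1,v_n)$-vertex cut of $H$ to a finite-capacity cut in $G$ and conversely extracting $C_H$ from a finite minimum cut $\partial(C_G)$. Two minor points: (i) the invocation of max-flow min-cut is unnecessary since the lemma already hypothesises that $\partial(C_G)$ is a minimum-capacity cut; (ii) when you pass from a weak cut $X$ in $H$ to the edge set $\{(v_{i,in},v_{i,out}) : v_i \in X\}$ in $G$, this set is a priori only a disconnecting set rather than a $\partial(\cdot)$-cut, so to bound $cap(\partial(C_G))$ you should (as the paper does) take the set $C_F$ of vertices reachable from $v_{1,out}$ after deleting those edges and observe $\partial(C_F)$ is contained in your edge set.
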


\begin{proof}
Suppose that the capacity of $\partial(C_G)$ is infinite; then there exists an edge in $H$ that contains both $v_1$ and $v_n$.
Such an edge must exist, otherwise $\partial\big( \{v_{1,out}\} \cup \{v_{i,in} \,|\, 2 \leq i \leq n-1\} \cup \{e \in E \,|\, v_1 \in e\} \big)$ is a $(v_{1,out},v_{n,in})$-cut of (finite) capacity $n-2$.
Since $v_1$ and $v_n$ are adjacent in $H$, there is no weak vertex cut for
which the deletion of the cut results in $v_1$ and $v_n$ becoming
separated, and so $\kappa_W(H,v_1,v_n) = |V| - 1$.

Now suppose that the capacity of $\partial(C_G)$ is finite.
By the construction of $G$, there must be exactly $cap(\partial(C_G))$ indices $i \in \{2, \dots, n-1\}$ such that $v_{i,in} \in C_G$ and $v_{i,out} \not \in C_G$ (with each such $i$ yielding a single edge of capacity 1 in $\partial(C_G)$). Let $I_H$ be this set of $cap(\partial(C_G))$ indices and let $C_H = \{ v_i \,|\, i \in I_H\}$.

We now show that $C_H$ is a weak $(v_1,v_n)$-vertex cut in $H$. Suppose that
\[ P_H = v_1, e_{p_1}, v_{p_1}, e_{p_2}, v_{p_2},\dots, e_{p_b}, v_{p_b} \]
is a $(v_1,v_n)$-path in $H\setminus_W C_H$ (so necessarily $v_{p_b} = v_{n}$).
The existence of $P_H$ implies the existence of the path
\[P_G =  v_{1,out}, e_{p_1}, v_{p_1,in}, v_{p_1,out}, e_{p_2}, v_{p_2,in}, v_{p_2,out},\dots, e_{p_b}, v_{p_b,in}\]
in $G$ (note that the $e_{p_j}$ are edges in $P_H$ but vertices in $P_G$).
As $\partial(C_G)$ is a $(v_{1,out},v_{n,in})$-cut in $G$, the path $P_G$ must contain at least one edge of $\partial(C_G)$.
Let $f$ be such an edge of $E(P_G) \cap \partial(C_G)$.
Since $cap(\partial(C_G))$ is finite, $f = (v_{p_j,in}, v_{p_j,out})$ for some $p_j \in I_H$.
However, this contradicts the fact that $P_H$ is a path in $H\setminus_W C_H$.
Therefore $C_H$ is a weak $(v_1,v_n)$-vertex cut in $H$,
and thus $\kappa_W(H,v_1,v_n) \leq |I_H| = cap(\partial(C_G))$.

We now turn our attention to showing that $cap(\partial(C_G)) \leq \kappa_W(H,v_1,v_n)$. Suppose $C_M \subset V$ is a minimum weak $(v_{1},v_{n})$-vertex cut in $H$.
Let $F = \{(v_{i,in},v_{i,out}) \,|\, v_i \in C_M\} \subset E_G$, observe that $|F| \leq |V|-2$, and consider $G\setminus F$.
Any $(v_{1,out},v_{n,in})$-path in $G\setminus F$ must be of the form
\[Q_G =  v_{1,out}, e_{q_1}, v_{q_1,in}, v_{q_1,out}, e_{q_2}, v_{q_2,in}, v_{q_2,out},\dots, e_{q_\ell}, v_{q_\ell,in} \]
for which there is a corresponding $(v_1,v_n)$-path
\[Q_H = v_1, e_{q_1}, v_{q_1}, e_{q_2}, v_{q_2},\dots, e_{q_\ell}, v_{p_\ell} \]
in $H\setminus_W C_M$. As there are no  $(v_1,v_n)$-paths in $H\setminus_W C_M$, there can be no such  $(v_{1,out},v_{n,in})$-path $Q_G$ in $G\setminus F$.
Hence $v_{1,out}$ and $v_{n,in}$ are separated in $G \setminus F$.
Define
\[
C_F = \{v \in V(G) \,|\, \exists \mbox{ a } (v_{1,out},v) \mbox{-path in } G\setminus F\}.
\]
Since $v_{1,out} \in C_F$ and $v_{n,in} \not \in C_F$, it follows that $\partial(C_F)$ is a $(v_{1,out},v_{n,in})$-cut in $G$
and thus $cap(\partial(C_G)) \leq cap(\partial(C_F))$.
By construction, $\partial(C_F) \subseteq F$ and so
$ cap(\partial(C_G)) \leq cap(\partial(C_F)) \leq |F| = \kappa_W(H,v_1,v_2)$.

It now follows that when $cap(\partial(C_G))$ is finite, it must be that
$ cap(\partial(C_G)) = \kappa_W(H,v_1,v_2) \leq |V|-2$ and, moreover,
that $C_H$ is a minimum weak $(v_1,v_n)$-vertex cut in $H$.
\end{proof}

When creating $G$ from $H$, the vertices $v_2, \dots, v_{n-1}$ of $H$ were split into $in$ and $out$ vertices. Suppose that instead of splitting the vertices of $H$ we split the edges of $H$ into $in$ and $out$ vertices in $G$ and again put capacities of 1 on all $(in,out)$ edges of $G$. Now the minimum capacity $(v_1,v_n)$-cuts of $G$ will correspond to minimum weak $(v_1,v_n)$-disconnecting sets of $H$.  Thus the same technique can be used to find minimum weak disconnecting sets of $H$.

\subsection{Complexity of strong vertex connectivity}
\label{strong_v_con_complexity}

In this section we will show that determining the strong vertex connectivity of an arbitrary hypergraph is NP-hard.
We then consider the complexity of determining the strong vertex connectivity for particular classes of hypergraphs. However, first we present a class of hypergraphs for which the problem is in P.

\begin{lemma} \label{lemma_kappa_s_size_2}
For a hypergraph $H$ with maximum edge size at most 2, the problems of determining $\kappa_S(H)$ and finding a minimum strong vertex cut are in P.
\end{lemma}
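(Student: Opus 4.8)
The plan is to show that for hypergraphs $H$ with maximum edge size at most $2$ the strong vertex connectivity coincides with the weak vertex connectivity, and then to invoke Theorem~\ref{thm_kappa_w_poly}. Concretely, I would first prove that for every $X \subseteq V$ the hypergraphs $H \setminus_S X$ and $H \setminus_W X$ have the same connected components (both have vertex set $V \setminus X$). The key observation is that if an edge $e$ of $H$ satisfies $[u,v] \subseteq e$ for two \emph{distinct} vertices $u,v$, then $|e| \geq 2$, so the hypothesis $|e| \leq 2$ forces $supp(e) = \{u,v\}$; hence, whenever $u,v \in V \setminus X$, the edge $e$ contains no vertex of $X$ at all. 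It follows that the edges of $H \setminus_W X$ of support size at least $2$ are exactly the edges of $H \setminus_S X$, while all remaining edges of $H \setminus_W X$ have support size at most $1$ and therefore do not affect connectivity. Consequently two vertices of $V \setminus X$ are connected in $H \setminus_S X$ if and only if they are connected in $H \setminus_W X$, so $c(H \setminus_S X) = c(H \setminus_W X)$ for every $X \subseteq V$.

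From this, a set $X \subseteq V$ is a strong vertex cut of $H$ if and only if it is a weak vertex cut of $H$, and in particular $\kappa_S(H) = \kappa_W(H)$: if $H$ is null or trivial both equal $1$ by convention; if $H$ is nontrivial and possesses a (weak, equivalently strong) vertex cut then both parameters equal the minimum cardinality of such a cut; and if $H$ is nontrivial but has no such cut then both equal $|V| - 1$. Moreover, since the families of strong and weak vertex cuts are literally identical, any minimum weak vertex cut of $H$ is also a minimum strong vertex cut.

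Finally, Theorem~\ref{thm_kappa_w_poly} asserts that $\kappa_W(H)$ and a minimum weak vertex cut of $H$ can be computed in polynomial time; by the preceding paragraph the same computation also yields $\kappa_S(H)$ and a minimum strong vertex cut of $H$. (Alternatively, one could observe directly that strong vertex deletion in a graph is ordinary graph vertex deletion, so $\kappa_S(H)$ equals the connectivity of the simple graph $[H]_2$, which is computable in polynomial time by~\cite[page~42]{NagamochiIbaraki}.) There is no genuinely difficult step here; the only points requiring care are the bookkeeping around edges of support size less than $2$ (loops, singleton and empty edges, repeated edges, and repeated vertices within an edge), none of which affect either connectivity parameter, together with the boundary conventions for null, trivial, and those hypergraphs that admit no vertex cut whatsoever.
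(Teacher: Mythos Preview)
Your proposal is correct and follows essentially the same approach as the paper's proof: both argue that when all edges have size at most~2, weak and strong deletion of a vertex set differ only in edges of support size less than~2, which are irrelevant for connectivity, so the strong and weak vertex cuts coincide and Theorem~\ref{thm_kappa_w_poly} applies. Your write-up is more detailed (and handles the boundary conventions explicitly), but the underlying idea is identical.
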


\begin{proof}
For hypergraphs of maximum edge size at most 2, the only difference between weakly and strongly deleting a set of vertices is the potential creation of edges of size less than 2.  However, edges of size less than 2 have no effect on the vertex connectivity (weak or strong) of a hypergraph. Thus, for hypergraphs of this type, the strong vertex cuts are exactly the weak vertex cuts and the result follows from Theorem~\ref{thm_kappa_w_poly}.
\end{proof}

\begin{thm} \label{thm_kappa_strong}
The problem of determining $\kappa_S(H)$ is NP-hard for arbitrary hypergraphs. Furthermore, the problem remains NP-hard when $H$ is restricted to hypergraphs with maximum edge size at most 3.
\end{thm}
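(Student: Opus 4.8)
The plan is to prove NP-hardness by a polynomial-time reduction from the \textsc{Vertex Cover} problem for simple graphs (which is NP-hard), arranged so that the hypergraph it produces has every edge of size at most~$3$; then a single reduction establishes both assertions, since hardness for the restricted class subsumes hardness for arbitrary hypergraphs. Given a simple graph $G=(V_0,E_0)$ with $E_0\neq\emptyset$, I would construct $H=(V,E)$ with $V=V_0\cup\{u,v\}$ for two new vertices $u,v$, and with $E$ consisting of the edges $\{u,a,b\}$ and $\{a,b,v\}$ for every $\{a,b\}\in E_0$, together with all $2$-subsets $\{x,y\}$ of $V_0$. Writing $\tau(G)$ for the transversal (i.e.\ vertex cover) number of $G$, the key claim is $\kappa_S(H)=\tau(G)$; since the construction is clearly polynomial and $H$ is connected with maximum edge size~$3$, this claim yields the theorem, because any polynomial-time procedure for $\kappa_S$ would then compute $\tau(G)$.

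To prove $\kappa_S(H)=\tau(G)$ I would use $\kappa_S(H)=\min\{\kappa_S(H,x,y)\mid x\neq y\}$ and split into three types of pairs, working directly with strong vertex cuts. First, for $\{u,v\}$: a minimum vertex cover $C\subseteq V_0$ of $G$ is a strong $(u,v)$-vertex cut, since strongly deleting $C$ destroys every edge $\{u,a,b\}$ and every edge $\{a,b,v\}$ (because $C$ meets each $\{a,b\}\in E_0$), leaving $u$ and $v$ isolated, so $\kappa_S(H,u,v)\leq\tau(G)$; conversely, if $T$ is any strong $(u,v)$-vertex cut and some $\{a,b\}\in E_0$ had $a,b\notin T$, then after strongly deleting $T$ the edges $\{u,a,b\}$, $\{a,b,v\}$, $\{a,b\}$ all survive and place $u$ and $v$ in one component through $a$, a contradiction; hence $T\cap V_0$ is a vertex cover of $G$ and $|T|\geq\tau(G)$. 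Second, for $x,y\in V_0$ the pair $\{x,y\}$ is itself an edge of $H$, so $\kappa_S(H,x,y)=|V|-1=|V_0|+1>\tau(G)$. Third, for a mixed pair $\{u,p\}$ with $p\in V_0$ (the case $\{v,p\}$ being symmetric), the same argument as in the first case — using the surviving $2$-edge $\{p,a\}$ in place of $\{a,b,v\}$ when $p\notin\{a,b\}$, and the surviving $3$-edge $\{u,p,b\}$ when $p\in\{a,b\}$, say $p=a$ — shows that any strong $(u,p)$-vertex cut $T$ has $T\cap V_0$ a vertex cover of $G$, so $\kappa_S(H,u,p)\geq\tau(G)$. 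Combining the three cases, $\kappa_S(H,x,y)\geq\tau(G)$ for every pair with equality attained at $\{u,v\}$, so $\kappa_S(H)=\tau(G)$.

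I expect the crux to be the lower bound, that is, precluding cheap strong cuts arising from pairs other than $\{u,v\}$, which is exactly the purpose of adding all $2$-subsets of $V_0$. Without them, $\{u,v\}$ would itself be a strong vertex cut of size~$2$, since strongly deleting $u$ and $v$ destroys every edge (each edge meets $\{u,v\}$), collapsing $\kappa_S(H)$ to a constant; the clique on $V_0$ removes this degeneracy and forces $\kappa_S(H,x,y)=|V|-1$ for $x,y\in V_0$, while not disturbing the value $\tau(G)$ for the pair $\{u,v\}$, because strongly deleting a vertex cover of $G$ still isolates both $u$ and $v$ and no strong $(u,v)$-cut can leave a $G$-edge $\{a,b\}$ with $a,b$ both present (else $\{u,a,b\}$ and $\{a,b,v\}$ reconnect $u$ to $v$). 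Carrying out the three-pair analysis in full detail then gives $\kappa_S(H)=\tau(G)$, so determining $\kappa_S(H)$ is NP-hard even for hypergraphs of maximum edge size~$3$, and hence for arbitrary hypergraphs.
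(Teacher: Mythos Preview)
Your reduction is correct. Both you and the paper reduce from \textsc{Vertex Cover} and produce hypergraphs of maximum edge size~$3$, but the constructions differ. The paper attaches $n$ new vertices $u_1,\ldots,u_n$ (each joined by a $2$-edge to every vertex of $V_G$) and $n$ further vertices $v_1,\ldots,v_n$ (each joined by a $3$-edge $e\cup\{v_i\}$ to every edge $e$ of $G$), and then argues via four claims that a minimum strong vertex cut of the resulting hypergraph cannot meet $A_u$ or $A_v$ and hence must be a strong $(A_u,A_v)$-cut, i.e.\ a vertex cover of $G$. You instead add only two new vertices $u,v$ together with the $3$-edges $\{u,a,b\}$ and $\{a,b,v\}$ for each $\{a,b\}\in E_0$, and compensate by inserting a complete graph on $V_0$; this clique forces $\kappa_S(H,x,y)=|V|-1$ for $x,y\in V_0$, and your pairwise analysis of the remaining pairs (including the observation that $T\cap V_0$ is a vertex cover even when $v\in T$) is sound.

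Your approach is shorter and uses only $|V_0|+2$ vertices rather than $3|V_0|$. The paper's construction, on the other hand, has a feature yours lacks: its hypergraph is bicolourable (colour $A_u\cup A_v$ one colour, $V_G$ the other) and has the K\"onig property, which the paper exploits in the corollaries immediately following the theorem. Your hypergraph contains $K_{|V_0|}$ as a subgraph and so fails to be bicolourable once $|V_0|\geq 3$, hence would not directly yield those corollaries.
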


\begin{proof}
The problem of finding the size of a minimum vertex cover in a simple graph is known to be NP-hard~\cite{karp}.
We will reduce this NP-hard problem to the problem of finding  $\kappa_S(H)$ for a hypergraph $H$ with maximum edge size of 3.

Let $G = (V_G, E_G)$ be a simple graph with $V_G = \{ x_1, x_2, \dots, x_n\}$ and at least one edge. The set $V_G \setminus \{x_1\}$ intersects every edge in $E_G$ and is thus a vertex cover of $G$. Therefore the size of a minimum vertex cover of $G$ is at most $n-1$.  Let
\begin{eqnarray*}
A_u &=& \{u_1, \dots, u_n\}, \\
A_v &=& \{v_1, \dots, v_n\}, \\
V   &=& V_G \cup A_u \cup A_v
\end{eqnarray*}
such that $A_u$, $A_v$ and $V_G$ are pairwise disjoint, and let
\begin{eqnarray*}
E   &=& \big\{ \{u_i,x_j\} \,\big|\, 1 \leq i,j \leq n\big\} \cup \big\{ e \cup \{v_i\} \,\big|\, e \in E_G, 1 \leq i \leq n\big\}.
\end{eqnarray*}
Consider the hypergraph $H = (V,E)$.
Given an edge $e$ in $G$, we shall denote the corresponding edge $ e \cup \{v_i\}$ in $H$ by $e_{v_i}$. Note that it follows from $|E_G| \geq 1$ that $H$ is connected.

We now make the following four claims regarding vertex covers of $G$, strong vertex cuts in $H$, and the relationship between them.
\\

\noindent \textbf{Claim 1:} The vertex covers of $G$ are exactly the strong $(A_u,A_v)$-vertex cuts in $H$. \\

\noindent \textbf{Claim 2:} If $C$ is a minimum strong vertex cut of $H$, then $C \cap  A_v = \emptyset$. \\

\noindent \textbf{Claim 3:} If $C$ is a minimum strong vertex cut of $H$, then $C \cap A_u = \emptyset$. \\

\noindent \textbf{Claim 4:} If $C$ is a minimum strong vertex cut of $H$, then $C$ separates $A_u$ and $A_v$ (i.e., $C$ is a minimum strong $(A_u,A_v)$-vertex cut in $H$). \\

Claims 2 and 3 will be used in the proof of Claim~4, while the reduction of vertex covers in simple graphs to strong vertex cuts in hypergraphs of maximum edge size at most 3 will follow directly from the construction of $H$ from $G$ and Claims~1 and~4. Therefore Theorem~\ref{thm_kappa_strong} holds subject to validating these four claims.
\\

\noindent \textbf{Proof of Claim 1:}\\
Suppose that $C$ is a strong $(A_u, A_v)$-vertex cut of $H$. Then $C \subseteq V_G$ and there exist no $(u_i,v_j)$-paths in $ H\setminus_S C$.
If $e = (x_k, x_{\ell})$ is an edge of $G$, then $u_i, \{u_i, x_k\}, x_k, e_{v_j}, v_j$ is a $(u_i,v_j)$-path in $H$ for each $1 \leq i,j \leq n$. Since $C$ is a strong $(A_u, A_v)$-vertex cut of $H$, this path does not exist in $H \setminus_S C$.  As $u_i,v_j \not \in C$, the only way for this path not to exist in $H \setminus_S C$ is if $\{x_k,x_{\ell}\} \cap C \neq \emptyset$.  Therefore $C$ is a vertex cover of $G$.

Conversely, suppose that $C \subseteq V_G$ is a vertex cover of $G$. Then $C$ intersects all edges of $G$ which implies that
$deg_{H \setminus_S C} (v_j) = 0$ for each $j \in \{1,2,\ldots,n\}$.
Therefore $C$ is a strong $(A_u,A_v)$-vertex cut in $H$ and Claim~1 holds.
\\

\noindent \textbf{Proof of Claim 2:}\\
Let $C$ be a minimum strong vertex cut of $H$ and suppose that $C \cap A_v \neq \emptyset$. Without loss of generality we may assume that $v_1 \in C$.

By the minimality of $C$ we know that $C' = C \setminus v_1$ is not a strong vertex cut of $H$ and that $v_1$ is a strong cut vertex of $H' = H \setminus_S C'$. This implies that there exist two distinct vertices $w,z$ in $H$ that are adjacent to $v_1$ in $H'$ but separated from each other in $H \setminus_S C$. Namely, consider a path in $H'$ between vertices that are separated in  $H \setminus_S C = H' \setminus v_1$. Either $v_1$ is an internal vertex of the path or $v_1$ is not on the path but is contained in an edge of the path. Both of these cases lead to the desired $w,z$ pair.

Suppose that $\{v_1,w,z\}$ is an edge of $H'$. Then $\{v_j,w,z\}$ is an edge of $H \setminus_S C$
for each $v_j \in A_v \setminus C$.
This would contradict $w$ and $z$ being separated in $H \setminus_S C$ unless $v_j \in C$ for each $j \in \{1,2,\ldots,n\}$.
However, $v_j \in C$ for each $j \in \{1,2,\ldots,n\}$ contradicts
the fact that $C$ is a minimum strong vertex cut of $H$, because $V_G \setminus \{x_1\}$ is a strong vertex cut of $H$ of size $n-1$.

Since
there is no edge $\{v_1,w,z\}$ in $H'$, the construction of $H$ implies that there exist two edges $\{v_1,w, y_w \}$ and $\{v_1,z,y_z\}$ in $H'$. But then \[ P = w, \{v_j,w,y_w\}, v_j, \{v_j, z, y_z\}, z\] would be a $(w,z)$-path in $H \setminus_S C$ for any $v_j \in A_v \setminus C$. This would again imply that $A_v \subseteq C$ which would contradict the fact that $C$ is a minimum strong vertex cut of $H$. We thus conclude that $C \cap A_v = \emptyset$ and Claim~2 holds.
\\

\noindent \textbf{Proof of Claim 3:}\\
%
Let $C$ be a minimum strong vertex cut of $H$ and suppose that $C \cap A_u \neq \emptyset$. Without loss of generality, we may assume that $u_1 \in C$.

By the minimality of $C$ we know that $C' = C \setminus u_1$ is not a strong vertex cut of $H$ and that $u_1$ is a strong cut vertex of $H' = H \setminus_S C'$. This implies that there exist two vertices $w,z$ in $H$ that are adjacent to $u_1$ in $H'$ but separated from each other in $H \setminus_S C$.
As $u_1$ is adjacent only to vertices of $V_G$, clearly $w,z \in V_G$.
Since $\{u_i,x_j\}$ is an edge of $H$ for each $1 \leq i,j \leq n$,
the only way for two vertices of $V_G \setminus C$ to be separated in $H \setminus_S C$ is if $A_u \subseteq C$,
which would contradict the fact that $C$ is a minimum strong vertex cut of $H$. Therefore $C \cap A_u = \emptyset$ and Claim~3 holds.
\\

\noindent \textbf{Proof of Claim 4:}\\
Let $C$ be a minimum strong vertex cut of $H$. Claims~2 and~3 imply that $C \subseteq V_G$ and thus it only remains to show that for all $i,j$ there is no $(u_i,v_j)$-path in $H \setminus_S C$.

Since $C$ is a minimum strong vertex cut in $H$, $|C| \leq n-1$ and so there exists an $i \in \{1,2,\ldots,n\}$ such that $x_i \in V_G \setminus C$.
Each vertex of $A_u$ is adjacent in $H$ to each vertex of $V_G$,
and thus the vertices of $A_u \cup (V_G \setminus C)$ are all in the same connected component of $H \setminus_S C$.

Suppose that for some $j \in \{1,2,\ldots,n\}$ the vertex $v_j$ is in the same connected component of $H \setminus_S C$ as the vertices of $A_u \cup (V_G \setminus C)$.
Then there exists a $(u_1, v_j)$-path in $H \setminus_S C$. Let $e_{v_j}$ be the last edge in such a path and let $z \in e_{v_j} \setminus \{v_j\}$.

Since $e_{v_j}$ is an edge of $H \setminus_S C$ and $C \cap  A_v = \emptyset$, we have that $e_{v_k}$ is in $H \setminus_S C$ for all $1 \leq k \leq n$. But then $v_j, e_{v_j}, z, e_{v_k}, v_k$ is a $(v_j,v_k)$-path in $H \setminus_S C$ for all $k \neq j$. This implies that each vertex of $A_v$ is in the same connected component of $H \setminus_S C$ as $v_j$, which contradicts the fact that $H \setminus_S C$ is disconnected. Therefore, no vertex of $A_v$ is in the same connected component of $H \setminus_S C$ as the vertices of $A_u \cup (V_G \setminus C)$ and Claim~4 holds.
\\

The construction of $H$ from $G$, together with Claims~1 and~4, shows a polynomial reduction of the problem of finding minimum vertex covers in simple graphs to the problem of finding minimum strong vertex cuts in hypergraphs with maximum edge size at most 3.
Theorem~\ref{thm_kappa_strong} is therefore proved.
\end{proof}

Next we consider the following particular classes of hypergraphs.

\begin{itemize}
\item A hypergraph $H$ is \textbf{bicolourable} if each vertex of $V$ can be labelled with one of two colours so that each edge of cardinality at least two is not monochromatic.

\item A hypergraph $H=(V,E)$ is said to have the \textbf{Helly} property if, for every multiset $F \subseteq E$ of pairwise intersecting edges,
there is a vertex $u_F \in V$ that is incident with each edge of $F$.

\item Any hypergraph $H$ for which $\tau(H)$ equals the size of a maximum matching is said to have the \textbf{K\"{o}nig property}.

\item A hypergraph is \textbf{normal} if each of its strong subhypergraphs has the K\"{o}nig property.

\item A hypergraph $H$ is \textbf{arboreal} if there exists a (not necessarily unique) tree $T$ on the same vertex set as $H$ such that each edge of $H$ induces a connected subgraph (i.e., subtree) of $T$. Such a tree $T$ is called a \textbf{representative tree} of $H$.

\item A hypergraph is \textbf{totally balanced} if, for every cycle $C$ of length at least 3, there is an edge of $C$
that contains at least three vertices of $C$.

\item A hypergraph is an \textbf{interval hypergraph} if there exists a total ordering of the vertices such that all edges are intervals of the ordering.

\end{itemize}

In~\cite{duchet} Duchet discusses these and other classes of hypergraphs.  Furthermore, a hierarchy is presented, showing that interval hypergraphs
are totally balanced, that totally balanced hypergraphs are arboreal, that arboreal hypergraphs are normal, and that normal hypergraphs not only have
the Helly and K\"{o}nig properties but are also bicolourable.
These classes of hypergraphs are also discussed in~\cite{Bretto2013}.

Recall the hypergraph $H$ that is constructed from the graph $G$ in the proof of Theorem~\ref{thm_kappa_strong}. Every edge of $H$ has one vertex in $A_u \cup A_v$ and at least one vertex in $V_G$.  Partitioning the vertices of $H$ in this way induces a 2-colouring of $H$ that establishes that $H$ is bicolourable.
Moreover, every edge of $H$ intersects $V_G$ and so the size of a minimum transversal of $H$ is at most $|V_G| = n$.
Now, consider the fact that if $M$ is any matching
and $T$ is any transversal in a hypergraph, then $|M| \leq |T|$ because $T$ contains at least one vertex from each edge in $M$.
Since $\{ (u_i,x_i) \,|\, 1 \leq i \leq n\}$ is a matching of size $n$ in $H$,
and $H$ has a transversal of size $n$, we have that the size of a maximum matching equals the size of a minimum transversal for $H$
(i.e., $H$ has the {K\"{o}nig property}). Thus we have the following two corollaries.

\begin{cor}
The problem of determining $\kappa_S(H)$ is NP-hard for bicolourable hypergraphs.
\end{cor}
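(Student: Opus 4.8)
The plan is to observe that the reduction already constructed in the proof of Theorem~\ref{thm_kappa_strong} produces a bicolourable hypergraph, so no new reduction is needed. Recall that from a simple graph $G=(V_G,E_G)$ with $V_G=\{x_1,\dots,x_n\}$ we built $H=(V,E)$ with $V=V_G\cup A_u\cup A_v$, where every edge of $H$ is either of the form $\{u_i,x_j\}$ with $u_i\in A_u$ and $x_j\in V_G$, or of the form $e\cup\{v_i\}$ with $e\in E_G$ and $v_i\in A_v$.

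First I would exhibit the $2$-colouring explicitly: assign colour $1$ to every vertex of $A_u\cup A_v$ and colour $2$ to every vertex of $V_G$. Every edge of $H$ contains at least one vertex of $A_u\cup A_v$ and at least one vertex of $V_G$, so it is not monochromatic; moreover every edge of $H$ has size $2$ or $3$, so the ``cardinality at least two'' clause in the definition of bicolourability is automatically satisfied. Hence $H$ is bicolourable.

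Next I would invoke the correctness and polynomiality of the reduction established in the proof of Theorem~\ref{thm_kappa_strong}: the problem of computing a minimum vertex cover of $G$ reduces, in polynomial time, to the problem of computing $\kappa_S(H)$, and, as just noted, the hypergraph $H$ produced by this reduction is always bicolourable. Since minimum vertex cover for simple graphs is NP-hard~\cite{karp}, it follows that determining $\kappa_S(H)$ is NP-hard even when $H$ is restricted to the class of bicolourable hypergraphs.

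I do not expect any genuine obstacle here: the corollary is an immediate consequence of inspecting the structure of the hypergraph already built in Theorem~\ref{thm_kappa_strong}. The only point requiring (minimal) care is verifying that the proposed colouring actually meets the definition of bicolourability, i.e.\ that it leaves no edge of cardinality at least two monochromatic; this is clear since each edge meets both colour classes.
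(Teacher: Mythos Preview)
Your proposal is correct and follows essentially the same approach as the paper: you observe that the hypergraph $H$ built in the proof of Theorem~\ref{thm_kappa_strong} is bicolourable via the partition $A_u\cup A_v$ versus $V_G$, exactly as the paper does. Your write-up is slightly more explicit (checking the edge-cardinality clause), but the argument is the same.
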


\begin{cor} \label{thm_kappa_s_konig}
The problem of determining $\kappa_S(H)$ is NP-hard for hypergraphs with the K\"{o}nig property.
\end{cor}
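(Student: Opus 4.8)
The plan is to observe that no new gadget is required: the polynomial-time reduction already constructed in the proof of Theorem~\ref{thm_kappa_strong} happens to output hypergraphs that lie entirely inside the target class. Recall that from an arbitrary simple graph $G=(V_G,E_G)$ with $V_G=\{x_1,\dots,x_n\}$ and at least one edge, that proof builds $H=(V,E)$ with $V = V_G \cup A_u \cup A_v$ and
\[
E = \big\{ \{u_i,x_j\} \,\big|\, 1 \leq i,j \leq n\big\} \cup \big\{ e \cup \{v_i\} \,\big|\, e \in E_G,\ 1 \leq i \leq n\big\},
\]
and establishes (via Claims~1--4) that the minimum strong vertex cuts of $H$ are exactly the minimum vertex covers of $G$. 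Since computing a minimum vertex cover of a simple graph is NP-hard~\cite{karp}, computing $\kappa_S$ over the family of hypergraphs of this form is NP-hard as well.

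The first step I would carry out is to confirm that every hypergraph $H$ produced by this reduction has the König property. Every edge of $H$ meets $V_G$, so $V_G$ is a transversal and $\tau(H)\le |V_G| = n$. On the other hand $\{ (u_i,x_i) \,|\, 1 \leq i \leq n\}$ is a matching of size $n$ in $H$, and any transversal must contain a vertex from each of these $n$ pairwise disjoint edges, so $\tau(H)\ge n$. Hence $\tau(H)=n$ equals the size of this maximum matching, i.e.\ $H$ has the König property. This verification is already performed in the paragraph immediately preceding the corollaries, so here I would simply invoke it.

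The second step is to assemble the pieces: the map $G \mapsto H$ is computable in polynomial time, by Claims~1 and~4 it takes minimum vertex covers of $G$ to minimum strong vertex cuts of $H$, and by the previous paragraph its image consists of hypergraphs with the König property. Composing with this last observation yields a polynomial-time reduction from minimum vertex cover in simple graphs to the problem of determining $\kappa_S(H)$ for hypergraphs $H$ with the König property, whence the latter is NP-hard. I do not anticipate a real obstacle, since all the substance is inherited from Theorem~\ref{thm_kappa_strong}; the only point deserving care is the convention that NP-hardness ``for a class $\mathcal{C}$'' means hardness of the problem restricted to inputs in $\mathcal{C}$, so it suffices that the reduction's image lie in $\mathcal{C}$ rather than that $\mathcal{C}$ be the class of all hypergraphs — and that is exactly what the matching-versus-transversal computation secures.
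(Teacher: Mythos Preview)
Your proposal is correct and mirrors the paper's own argument exactly: the paper also observes (in the paragraph immediately preceding the corollaries) that the hypergraph $H$ built in Theorem~\ref{thm_kappa_strong} has $\tau(H)=n$ via the transversal $V_G$ and the matching $\{(u_i,x_i)\,|\,1\le i\le n\}$, so the same reduction witnesses NP-hardness on the K\"onig class. Nothing further is needed.
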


We have shown that
determining $\kappa_S(H)$ for general hypergraphs, hypergraphs with maximum edge size at most 3, bicolourable hypergraphs and hypergraphs with the K\"{o}nig property is NP-hard.
However, Lemma~\ref{lemma_kappa_s_size_2} establishes that there are classes of hypergraphs (such as those having maximum edge size 2) for which the problem is in P. Below we give other classes of hypergraphs for which the problem is in P.

\begin{lemma} \label{lemma_kappa_s_p_arboreal}
Suppose $H$ is an arboreal hypergraph. If $H$ is connected then $\kappa_S(H) = 1$; otherwise $\kappa_S(H) =0$.
Furthermore,  the problems of determining $\kappa_S(H)$ and finding a minimum strong vertex cut when one exists are in P.
\end{lemma}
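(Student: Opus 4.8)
The plan is to reduce quickly to the connected case and then exploit the representative tree.  First I would observe that if $H$ is disconnected then $\kappa_S(H)=0$ by the convention adopted after the definition of strong vertex connectivity, and that this case is trivially detectable in polynomial time (e.g.\ by checking connectivity of the $2$-section, via Lemma~\ref{lemma_connected_2section}).  So it suffices to handle connected $H$, and to show both that $\kappa_S(H)=1$ and that a strong cut vertex can be found efficiently.

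Next I would record the structural fact that underlies everything: in an arboreal hypergraph with representative tree $T=(V,F)$, the support $V_P = supp\big(\bigcup_{e \in E(P)} e\big)$ of any path $P$ in $H$ induces a connected subgraph (subtree) of $T$.  This holds because each edge of $P$ induces a subtree of $T$, and consecutive edges of the path $P$ share a vertex, so the union of these subtrees is connected; an easy induction on the length of $P$ finishes it.  Consequently the vertex set of any connected component of $H$ induces a subtree of $T$ — in particular, if $H$ is connected then all of $V$ induces $T$ itself (so $T$ is a tree on $V$).

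Now suppose $H$ is connected.  If $|V|\le 2$ then $H$ is a trivial or single-edge hypergraph with $\kappa_S(H)=|V|-1\le 1$, and in fact one checks $\kappa_S(H)=1$ in these boundary cases as well (a single-edge hypergraph on two vertices has no strong vertex cut, so $\kappa_S=|V|-1=1$).  If $|V|\ge 3$ then $T$, being a tree on $\ge 3$ vertices, has a non-leaf vertex $u$.  I claim $u$ is a strong cut vertex of $H$.  Indeed $T\setminus u$ is disconnected; pick $v_1,v_2$ in distinct components of $T\setminus u$.  If some $(v_1,v_2)$-path $P$ existed in $H\setminus_S u$, then by the structural fact $V_P$ would induce a subtree of $T$ containing both $v_1$ and $v_2$ but avoiding $u$, contradicting that $u$ separates them in $T$.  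Hence $u$ is a strong cut vertex, so $\kappa_S(H)=1$, and a minimum strong vertex cut is $\{u\}$.

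Finally, for the algorithmic claim: if a representative tree $T$ is provided, one simply outputs any non-leaf vertex of $T$ (or reports $\kappa_S(H)=|V|-1$ when $|V|\le 2$); testing connectivity of $H$ and locating a non-leaf vertex of $T$ are clearly polynomial.  If no representative tree is given, one can still proceed in polynomial time without constructing one: test connectivity, and if $|V|\ge 3$ simply try each vertex $v\in V$ in turn, strongly deleting it and checking (again via the $2$-section) whether $H\setminus_S v$ is disconnected; the argument above guarantees at least one such $v$ exists, so this halts with a strong cut vertex after at most $|V|$ connectivity tests.  The only mild subtlety — and the one place to be careful — is the boundary bookkeeping for $|V|\le 2$ and the interplay between the "disconnected $\Rightarrow \kappa_S=0$" convention and the "$\kappa_S=|V|-1$ when no cut exists" convention; the main content of the lemma, however, is the subtree-of-$T$ argument showing every non-leaf vertex of a representative tree is a strong cut vertex.
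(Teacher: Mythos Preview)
Your proposal is correct and follows essentially the same route as the paper: both arguments show that any non-leaf vertex $u$ of a representative tree $T$ is a strong cut vertex, by observing that a $(v_1,v_2)$-path in $H\setminus_S u$ would force an edge (or, in your phrasing, the union of edges along the path) to induce a connected subgraph of $T$ avoiding $u$ yet joining two components of $T\setminus u$. The only notable difference is in the polynomial-time justification: the paper cites that a representative tree can be constructed in polynomial time, whereas you give the pleasant alternative of simply testing each vertex for being a strong cut vertex---which sidesteps the need to build $T$ at all.
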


\begin{proof}
Let $H=(V,E)$ be an arboreal hypergraph and let $T = (V,F)$ be a representative tree of $H$.
As noted in~\cite{BBF2005}, representative trees of arboreal hypergraphs can be found in polynomial time by adapting an inductive proof of Slater~\cite{Slater1978} (also see~\cite[page~64]{NagamochiIbaraki}). Determining if $H$ is connected is also in P
(as this can be determined by a breadth first search).

If $H$ is null or trivial, then $\kappa_S(H)=1$. If $H$ is disconnected, then $\kappa_S(H)=0$. If $|V| = 2$ and $H$ is connected, then $H$ has no strong (or weak) vertex cuts and $\kappa_S(H) = |V| -1 = 1$. Thus, henceforth we assume that $H$ is connected and $|V| \geq 3$.

As $|V| \geq 3$, $T$ has a vertex that is not a leaf.  Let $u \in V$ be such a vertex, i.e., $deg_T(u)>1$.
Clearly $u$ can be found in polynomial time,
and since $u$ is not a leaf, $T \setminus u$ is disconnected. Let $v_1$ and $v_2$ be vertices in different connected components of $T \setminus u$. Suppose that there exists a path $P$ from $v_1$ to $v_2$ in $H \setminus_S u$. Then $P$ would contain an edge $e$ in $H \setminus_S u$ that contains vertices from two different connected components of $T \setminus_S u$. However, since $u \not \in e$ this would mean that $e$ does not induce a connected subgraph of $T$ and hence contradicts the fact that $H$ is arboreal. Therefore $u$ is a strong cut vertex of $H$ and $\kappa_S(H) = 1$.
\end{proof}

\begin{cor}
Let $H$ be a hypergraph that is either:
\begin{enumerate}
\item totally balanced; 
\item without cycles of length at least 3; or
\item an interval hypergraph. 
\end{enumerate}
If $H$ is connected then $\kappa_S(H) = 1$; otherwise $\kappa_S(H) =0$. Furthermore, for such hypergraphs
the problems of determining $\kappa_S(H)$ and finding a minimum strong vertex cut when one exists are in P.
\end{cor}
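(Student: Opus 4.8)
The plan is to show that each of the three classes named in the statement is contained in the class of arboreal hypergraphs, and then to invoke Lemma~\ref{lemma_kappa_s_p_arboreal}, which already delivers both the claimed value of $\kappa_S(H)$ and the polynomial-time algorithms.

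First I would handle classes (1) and (3) by simply appealing to the hierarchy of Duchet recalled earlier in this section: interval hypergraphs are totally balanced, and totally balanced hypergraphs are arboreal. Hence every hypergraph of type (1) or (3) is arboreal. Next, for class (2), I would argue that a hypergraph with no cycles of length at least $3$ is (vacuously) totally balanced: the defining condition of total balance quantifies over all cycles $C$ of length at least $3$ and asserts that some edge of $C$ contains at least three vertices of $C$, and when no such cycle exists this holds vacuously. Thus a hypergraph of type (2) is totally balanced, and therefore arboreal as well.

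Having placed all three classes inside the arboreal hypergraphs, the conclusion follows directly from Lemma~\ref{lemma_kappa_s_p_arboreal}: for an arboreal $H$ one has $\kappa_S(H)=1$ if $H$ is connected and $\kappa_S(H)=0$ otherwise, and both $\kappa_S(H)$ and a minimum strong vertex cut (when one exists) can be found in polynomial time, using in particular that a representative tree of an arboreal hypergraph is computable in polynomial time.

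I do not expect a genuine obstacle here; the only point that warrants a moment's care is the vacuous-truth step for class (2), where one should check against the paper's precise definition of a cycle (a closed walk on $s\geq 1$ distinct edges and $s$ distinct vertices, of length $s$) that the absence of cycles of length at least $3$ really does make the total-balance condition vacuous. This is immediate, and the remainder of the argument is a direct citation of the class hierarchy together with Lemma~\ref{lemma_kappa_s_p_arboreal}.
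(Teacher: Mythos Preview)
Your proposal is correct and matches the paper's own proof essentially verbatim: the paper simply notes that interval hypergraphs and hypergraphs without cycles of length at least $3$ are totally balanced, that totally balanced hypergraphs are arboreal (citing Duchet), and then applies Lemma~\ref{lemma_kappa_s_p_arboreal}. Your only addition is the explicit vacuous-truth justification for class~(2), which the paper leaves implicit.
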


\begin{proof}
Interval hypergraphs and hypergraphs without cycles of length at least 3 are totally balanced, and totally balanced hypergraphs are arboreal (see \cite[page~395]{duchet}).
\end{proof}

We have seen that finding minimum strong vertex cuts for arboreal hypergraphs is in P,
yet the problem is NP-hard for bicolourable hypergraphs and for hypergraphs with the K\"{o}nig property.
An interesting question, for which we do not yet have an answer, is what is the complexity of this problem for normal hypergraphs
(which contain arboreal hypergraphs as a subclass, and are contained as a subclass of both bicolourable hypergraphs and hypergraphs with the K\"{o}nig property)?

\subsection{Complexity of finding strong vertex cuts versus transversals} \label{section_strong_vs_trans}
Similarities between transversals and strong vertex cuts were discussed in Section~\ref{section_transversals} where it was shown that $\kappa_S(H) \leq \tau(H)$ for any nontrivial hypergraph $H$. We shall now consider the relative complexities of finding minimum transversals and minimum strong vertex cuts in hypergraphs.

Finding a minimum transversal of an arbitrary hypergraph is an NP-hard problem
(it is equivalent to the Set Covering problem of~\cite{karp}).
Theorem~\ref{thm_kappa_strong} showed that finding a minimum strong vertex cut in an arbitrary hypergraph is also NP-hard. Given the similarity of the two problems and their complexity over the set of all hypergraphs, it is natural to compare their computational complexities over various classes of hypergraphs.  For example, over which classes of hypergraphs do the two problems have the same complexity, over which classes do they differ and when they differ is one problem consistently harder than the other?

Consider the class of hypergraphs with the K\"{o}nig property. Corollary~\ref{thm_kappa_s_konig} states that finding minimum strong vertex cuts is NP-hard for hypergraphs with the K\"{o}nig property.
The maximum fractional matching problem and the minimum fractional transversal problem are dual linear programs (see \cite[Chapter~7, Section~3.2]{duchet}
for more details). Denote the optimal values to these problems by $\alpha^{*}(H)$ and $\tau^{*}(H)$, respectively.
As they are dual linear programs,  these problems can be solved in polynomial time and additionally
\[
\alpha(H) \leq \alpha^{*}(H) = \tau^{*}(H)  \leq \tau(H),
\]
where $\alpha(H)$ is the maximum size of a matching in $H$.
As $\alpha(H) = \tau(H)$ for hypergraphs with the K\"{o}nig property, it follows that calculating $\tau(H)$ for hypergraphs with the K\"{o}nig property is in P.
Here we have a class of hypergraphs for which calculating $\tau(H)$ is in P, but calculating $\kappa_S(H)$ is NP-hard.

As arboreal hypergraphs have the K\"{o}nig property, calculating $\tau(H)$ for arboreal hypergraphs is in P.  Lemma~\ref{lemma_kappa_s_p_arboreal} indicates that determining $\kappa_S(H)$ is in P for arboreal hypergraphs, thus we have that for this class of hypergraphs the calculations of $\tau(H)$ and $\kappa_S(H)$ are both in P.

Let $H=(V,E)$ be a hypergraph and define $\ddot{H}$ to be the hypergraph with vertex set $V \cup \{u_1,u_2\}$ and edge set $E \cup \big\{ \{u_1,u_2\}, \{u_1,u_2\} \cup V \big\}$, where $u_1 \neq u_2$ and $u_1,u_2 \not \in V$. Let $\frak{H} = \{\ddot{H} \,|\, H \mbox{ is a hypergraph}\}$.  For any hypergraph $H$ with at least one vertex, $\{u_1\}$ is a minimum strong vertex cut of $\ddot{H}$ and $\tau(\ddot{H}) = \tau(H) +1$ (the minimum transversals of $\ddot{H}$ are exactly the minimum transversals of $H$ with $u_1$ or $u_2$ added). Therefore,
over $\frak{H}$ the problem of finding a minimum strong vertex cut is in P, while the problem of finding a minimum transversal is NP-hard.

\section{Concluding remark}
Having established with Theorem~\ref{thm_kappa_strong} that the problem of determining strong vertex connectivity is NP-hard for arbitrary hypergraphs, it is natural to ask what the best running time is for solving this problem.  We leave this as an open question.


\bibliographystyle{plain}
\bibliography{hypergraph_refs}
\end{document}